\def\squarebox#1{\hbox to #1{\hfill\vbox to #1{\vfill}}}
\newcommand{\1}{{\bold 1}}
\theoremstyle{plain}
\newtheorem{thm}{Theorem}
\newtheorem{cor}{Corollary}
\newtheorem{lem}{Lemma}
\newtheorem{rem}{Remark}
\newtheorem{prop}{Proposition}
\newtheorem{exa}{Example}
\def\pu{\partial_t u}
\def\la{\langle}
\def\ra{\rangle}
\def\12{\frac{1}{2}}
\def\phi{\varphi}
\def\epsilon{\varepsilon}
\def\kappa{\varkappa}
\def\ep{\epsilon}
\def\lr#1{\langle{#1}\rangle}
\def\dif{\partial}
\def\al{\alpha}
\def\be{\beta}
\def\gam{\gamma}
\def\R{{\mathbb R}}
\numberwithin{equation}{section}
\begin{document}

\def\R {{\mathbb{R}}}
\def\N {{\mathbb{N}}}
\def\C {{\mathbb{C}}}
\def\Z {{\mathbb{Z}}}
\def\T{{\mathbb T}}
\def\Q{{\mathbb Q}}
\def\SP{{\mathbb S}}
\def\mc{{\mathcal H}}
\def\1b{{\mathbb I}}
\def\pu{\partial_t u}
\def\pt{\partial_t}
\def\pa{\partial}

 \vbadness=10000
 \hbadness=10000

\def\phi{\varphi}
\def\epsilon{\varepsilon}
\def\kappa{\varkappa}
\def\eT{e^{-\lambda T}}
\def\ii{{\bf i}}
\def\hh{\hat{x}}
\def\hhx{\hat{\xi}}
\def\12{\frac{1}{2}}
\def\Rc{{\mathcal R}}
\def\tE{\tilde{E}}
\def\ep{\epsilon}
\def\la{\langle}
\def\ra{\rangle}
\def\co{{\mathcal O}}
\def\pa{\partial}
\def\eN{e^{\frac{N}{t}}}
\def\et{e^{\frac{N}{\tau}}}
\def\op{{\rm Op}}

\def\Qed{\qed\par\medskip\noindent}

\title[Cauchy problem for hyperbolic operators ]{Cauchy problem for hyperbolic operators with triple effective characteristics on the initial plane} 
\date{}
\author[T. Nishitani] {Tatsuo Nishitani}

\address{Departement of Mathematics, Osaka University, Machikaneyama 1-1, Toyonaka 560-0043, Japan}
\email{nishitani@math.sci.osaka-u.ac.jp}
\author[V. Petkov]{Vesselin Petkov}
\address{Institut de Math\'ematiques de Bordeaux, 351,
Cours de la Lib\'eration, 33405  Talence, France}
\email{petkov@math.u-bordeaux.fr}

\begin{abstract} We study effectively hyperbolic operators $P$ with triple characteristics points lying on $t= 0$. Under some conditions on the principal symbol of $P$ one proves that the Cauchy problem for $P$ in $[0, T] \times U$ is well posed for every choice of lower order terms. Our results improves those in \cite{NP} since we don't assume the condition (E) of \cite{NP} satisfied.
\end{abstract}

\subjclass[2010]{Primary 35L30, Secondary 35S10}
\keywords
{Cauchy Problem, Effectively Hyperbolic Operators, Triple
  Characteristics, Energy Estimates}

\vspace{0.5cm}
\maketitle
\section{Introduction}

In this paper we study the Cauchy problem for a differential operator
 $$
 P(t, x, D_t, D_x) = \sum _{k + |\al| \leq 3} c_{k, \al} (t, x) D_t^{k} D_x^{\al},\;\; D_t = -i\partial_t,\;\; D_{x_j} = -i\partial_{x_j}
 $$ 
 of order $3$ with smooth coefficients $c_{k,\alpha}(t,x),\: t \in \R,\:x \in \Omega,\: c_{3, 0}\equiv 1.$ Denote by 
$$
p(t, x, \tau, \xi) = \sum_{k+|\alpha| = 3} c_{k,\alpha} (t, x) \tau^{k} \xi^{\al}=\tau^3+q_1(t,x,\xi)\tau^2+q_2(t,x,\xi)\tau+q_3(t,x,\xi)
$$
the principal symbol of $P$. With a real symbol $\phi\in S^0_{1,0}$ one can write
\begin{equation}
\label{eq:1.1}
\begin{split}
P= (D_t-\op(\phi)\lr{D})^3+ \op(a)\lr{D}(D_t-\op(\phi)\lr{D})^2- \op(b)\lr{D}^2 (D_t-\op(\phi)\lr{D})\\+ \op(c)\lr{D}^3
-\sum_{j=0}^2\op({\tilde b}_{1j})\lr{D}^j(D_t-\op(\phi)\lr{D})^{2-j}
\end{split}
\end{equation}
which is a differential operator in $t$. Here the symbols $a,\,b,\,c\in S^0_{1,0}$ coincide with
\[
q_1\lr{\xi}^{-1}+3\phi,\quad -\big(q_2\lr{\xi}^{-2}+2\phi q_1\lr{\xi}^{-1}+3\phi^2\big),\quad q_3\lr{\xi}^{-3}+\phi q_2\lr{\xi}^{-2}+\phi^2\lr{\xi}^{-1}+\phi^3,
\]
respectively,  ${\tilde b}_{1j}\in S^0_{0,1},\: j=0,1,2$  (see \cite{H2}), and $\lr{D}$ has symbol $\la \xi \ra = (1 + |\xi|^2)^{1/2}.$
Throughout the paper  we work with symbols $s(t,x,\xi)$ which depend smoothly on $t \in [0, T]$   and we use the Weyl quantization
\[
s(t,x,D)u=({\rm Op}^w(s)u)(x)=(2\pi)^{-n}\int\int e^{i(x-y)\xi }s\Bigl(t, \frac{x+y}{2},\xi\Bigr)u(y)dyd\xi.
\]
%
We will use the notation $S^{m}_{0,1}$ for the class of symbols (see \cite{H2}) and we abbreviate  $S^m_{1,0}$ to $S^m$ and ${\rm Op}^w(s)$ to $\op(s)$.
First we assume  that the principal symbol
\begin{equation} \label{eq:1.2}
p(t, x, \tau, \xi) =(\tau-\phi \lr{\xi})^3 + a \lr{\xi}(\tau-\phi \lr{\xi})^2 -b\lr{\xi}^2(\tau-\phi \lr{\xi})  +c\lr{\xi}^3
\end{equation}
is hyperbolic, that is the roots of equation $p = 0$ with respect to $\tau$ are real for $(t, x, \xi) \in [0, T] \times \Omega \times \R^n,$ where $\Omega \subset \R^n$ is an open set. Second we assume also that $p$ has triple characteristic points only if $t = 0$ and $P$ is {\it effectively hyperbolic} (see \cite{IP}, \cite{NP}) at every triple characteristic points $\rho = (0, x, \tau, \xi)$ which is equivalent (see \cite{IP}) to the condition
$$
 \frac{\partial^2 p}{\partial t \partial \tau}(\rho) < 0.
 $$
Recall that an operator is effectively hyperbolic if the fundamental matrix $F_p(z)$ of  the principal symbol $p$ has two non-vanishing eigenvalues $\pm\mu(z)$ at every point $z$ where $dp(z) = 0$. An effectively hyperbolic operator may have triple characteristics only for $t = 0$ or $t = T$ (see \cite{IP}).
Consequently, at a triple characteristic point $\rho_0 = (0, x_0, 0, \xi_0)$, assuming $\phi(0,x_0,\xi_0)=0$,   we have $b_t(0, x_0, \xi_0) >0.$ Moreover, at $\rho_0$ we have $a(0, x_0, \xi_0) = b(0, x_0, \xi_0) = c(0, x_0, \xi_0) =0$.\\

Our purpose is to study the Cauchy problem for $P$ and to prove that under some conditions on $p$ this problem is well posed for every choice of lower order terms. This property is called {\it strong hyperbolicity} and the effective hyperbolicity of $P$ is a necessary condition for it (\cite{IP}). For operators having only double characteristics every effectively hyperbolic operator is strongly hyperbolic and we refer to \cite{N2} for the related works. The conjecture is that effectively hyperbolic operators with triple characteristic points on $t= 0$ are strongly hyperbolic (see \cite{IP},\cite{BBP}, \cite{NP}). On the other hand, for some class of hyperbolic operators with triple characteristics the above conjecture has been proved in \cite{I}, \cite{BBP}, \cite{NP}, but the general case is still an open problem.\\

In \cite{NP} the strong hyperbolicity was established under the condition (E) saying that for some $\delta > 0$ we have the lower bound
$$\frac{\Delta}{\la \xi \ra^6}  \geq \delta t \Bigl(\frac{\Delta_0}{\la \xi \ra^2}\Bigr)^2.$$
Here $\Delta \in S^6$ is the discriminant of the equation $p = 0$ with respect to $\tau$, while
$\Delta_0 \in S^2$ is the discriminant of the equation $\frac{\pa p}{\pa \tau} = 0$ with respect to $\tau.$
In \cite{NP} it was introduced also a weaker condition (H) saying that with some constant $\delta > 0$ we have
$$\frac{\Delta}{\la \xi \ra^6}  \geq \delta t^2 \frac{\Delta_0}{\la \xi \ra^2}.$$
We can consider a microlocal version of the conditions (E) and (H) assuming the above inequalities hold for $(t, x, \xi)$ in a small conic neighborhood $W_0$ of every triple characteristic point $(0, x_0, \xi_0).$
The main goal of this paper  will be stated in Theorem \ref{pro:energy} and Corollary \ref{cor:main} which improve the results in \cite{NP} and  show that we have a strong hyperbolicity for some operators for which (E) is not satisfied, but (H) holds. In particular, we cover the case of operators whose principal symbol $p$ admits a microlocal factorization with one smooth root under the condition that there are no double characteristic points of $p$ converging to a triple characteristic point $(0, x, 0,\xi)$ (see Example 1.1).\\

 Concerning the symbols $a(t, x, \xi),\:b(t, x, \xi),\: c(t, x, \xi),$ we assume the existence of $\delta_1 > 0$ such that
 \begin{equation} 
 \label{eq:miki}
 \begin{split}
&b(t,x,\xi)\geq \delta_1 t, \\
& c=\co(b^2),\;\;\la \xi \ra^{\al}\dif_{\xi}^{\al}\dif_{x}^{\be}c=\co(b),\;|\al+\be|=1,\;\;\la \xi \ra^{\al}\dif_{\xi}^{\al}\dif_{x}^{\be}c=\co(\sqrt{b}),\;|\al+\be|=2,\\
&\dif_tc=\co(b),\;\;\la \xi \ra^{\al}\dif_{\xi}^{\al}\dif_{x}^{\be}(ac)=\co(\sqrt{b}),\;\;|\al+\be|=3.
 \end{split}
 \end{equation}
It is clear that the condition \eqref{eq:miki} are satisfied if 
\begin{equation}
\label{eq:miki:1}
\begin{split}
b(t,x,\xi)\geq \delta_1t,\;\;
 \la \xi \ra^{\al}\dif_t^{\gamma}\dif_{\xi}^{\al}\dif_{x}^{\be}c=\co\big(b^{2-|\al+\be|/2-|\gamma|}\big)\;\;\text{for}\;\; |\al+\be+\gamma|\leq 3,\;\;\gamma=0,1.
\end{split}
\end{equation}
In fact, we assume a slightly weaker microlocal conditions formulated in (\ref{eq:joken}) and Theorem \ref{pro:energy}.\\

Below we present two examples of operators with triple characteristics on $t = 0$ satisfying the above assumptions.
\begin{exa} \rm Assume $c \equiv 0.$ Then the symbol $p$ becomes $p = ((\tau-\phi\lr{\xi})^2 + a \la \xi\ra (\tau-\phi\lr{\xi}) - b \la \xi \ra^2)(\tau-\phi\lr{\xi})$.
 Let $\rho = (0,x_0, \varphi(0, x_0, \xi_0)\la \xi_0\ra,\xi_0),$  be a triple characteristic point. 
For small $t > 0$ we have $b(t, x_0, \xi_0) > 0.$ If for some
$(y, \eta)$ sufficiently close to $(x_0, \xi_0)$ we have $b(0,y, \eta) <0$, then there exists $z =(t^*, x^*, \xi^*)$ with $t^* >0$ such that $b(z) = 0$ and the equation $(\tau-\phi\lr{\xi})^2 + a \lr{\xi}(\tau-\phi\lr{\xi})-b \lr{\xi}^2= 0$ has a root $\phi(z)\lr{\xi^*}$ for $z$. This implies the existence of a double characteristic point $(t^*, x^*, \varphi(z)\la \xi^* \ra, \xi^*)$ of $p$. We exclude this possibility, assuming $b(0, x,\xi) \geq 0$ for $(x, \xi)$ close to $(x_0,\xi_0).$
\end{exa}
\begin{rem}\rm  For the operator in Example $1.1$, the discriminant of the equation $p = 0$ has the form $\Delta = b^2( a^2 + 4b) \la \xi \ra ^6,$ while $\Delta_0 = 4(a^2 + 3 b)\la \xi\ra^2.$ Therefore the condition $(E)$ is reduced to
$$
b^2( a^2 + 4b) \geq \delta t(a^2+ 3b)^2.
$$
If $b=\co(t),$ this inequality yields $b^2a^2+4b^3\geq \delta t a^4$ and hence $a^2\leq \co(t^2)/\delta t=\co(t)$ which is not satisfied in any small neighborhood of a triple characteristic point $(0, x_0, \varphi(0, x_0,\xi_0) \la \xi_0\ra, \xi_0)$, unless $a(0,x,\xi)=0$ for all $(0,x,\xi)$ close to the point $(0,x_0,\xi_0)$. On the other hand, the inequality
$$
b^2(a^2 + 4b) \geq \delta t^2(a^2 + 3b)
$$
obviously holds ($b\geq \delta_1t$ is assumed), hence $(H)$ is satisfied.
\end{rem}
The Example 1.1 covers the case when the principal symbol $p$ admits a factorization
$$
p = (\tau^2 + 2d(t, x,\xi) \tau + f(t, x, \xi))(\tau - \lambda(t, x, \xi))
$$
with $C^{\infty}$ smooth real root $\lambda(t, x, \xi)$ and $p$ has not double characteristic points in a neighborhood of $(0, x_0,\xi_0)$. In fact, we may write
$$
p = \big((\tau- \lambda)^2 + 2(\lambda + d)(\tau- \lambda) + \lambda^2 + 2d \lambda +f\big) (\tau- \lambda)
$$
and taking $\phi=\lambda \lr{\xi}^{-1}$ we reduce the symbol to Example 1.1. Notice that effectively hyperbolic operators with principal symbols admitting above factorization have been studied by V. Ivrii in \cite{I} who proved the strong hyperbolicity  constructing parametrix. Here we present another proof based on energy estimates with weight $t^{-N},$
assuming $P$ strictly hyperbolic for small $t > 0$.
\begin{exa}  \rm Consider the operator with principal symbol 
$$
p = \tau^3 -(t + \alpha(x,\xi))\la \xi \ra^2 \tau  -  (t^2b_2 + tb_1 + b_0)\la \xi \ra ^3,
$$ 
where $\alpha, b_0, b_1, b_2$ are zero order pseudo-differential operators and $\alpha \geq 0.$ This class of operators has been studied in \cite{NP} under the condition $(E)$. We write $p$ as follows
$$
p = (\tau + b_1 \la \xi \ra)^3 -3 b_1 \la \xi \ra(\tau + b_1 \la \xi \ra)^2 - \big(t+\al-3b_1^2 \big)\la \xi \ra^2(\tau+ b_1 \la \xi \ra)
$$
$$ 
- \big[ t^2 b_2+ b_0 -b_1 \alpha+b_1^3\big] \la \xi \ra^3.
$$
Choosing $\phi=-b_1(t,x,\xi)$ one reduces the symbol $p$ to the form $(\ref{eq:1.2})$ with
$a =- 3b_1,\:b = t + \al- 3 b_1^2,\: c = -(t^2 b_2 +b_0 - b_1\al+b_1^3).$ If $\al\geq  3b_1^2,\: b_0 =b_1 \al-b_1^3,$ the condition \eqref{eq:miki:1} is satisfied, while for $\al = 3b_1^2,\: b_0 = b_1 \al - b_1^3$ the condition $(E)$ is not satisfied for $b_1$, unless $b_1(0,x,\xi)\equiv 0$. It is easy to see that with the above choice of $b_0$ and $b_1$, the condition $(H)$ holds.
\end{exa}

Notice that if $\rho = (t, x, \tau, \xi)$ with $t > 0$ is a double characteristic point for $p$, one has $\Delta(\rho) = 0$ and $\Delta_0(\rho) > 0$. Therefore the condition (H) is not satisfied and the analysis of this case is a difficult open problem.
The proofs in this work are based on energy estimates with weight $t^{-N}$ with  $N \gg 1$ leading to estimates with big loss of regularity. This phenomenon is typical for effectively hyperbolic operators with multiple characteristics (see \cite{IP}, \cite{BBP}, \cite{NP}). We follow the approach in \cite{NP} reducing the problem to the one for first order pseudo-differential system. In Section 2 we construct a symmetrizer $S$ for the principal symbol of the system following a general result (see Lemma 2.1) which has independent interest. Moreover, $\det S = \frac{1}{27} \Delta$ and under our assumptions one shows that $\det S \geq \delta b^2(a^2 + 4b),\: \delta > 0.$ Therefore $\Delta \geq \ep t^2(a^2 + 4b),\: \ep > 0,$ and in general the condition (E) is not satisfied. This leads to difficulties in Section 3, where a more fine analysis of the matrix pseudo-differential operators is needed. In Section 4 we show that the microlocal conditions (\ref{eq:miki}) are sufficient for the energy estimates in Theorems 4.1 and 4.2.

\section{Symmetrizer}

First we recall a general result concerning the existence of a symmetrizer. Let $p(\zeta)=\zeta^m+a_1\zeta^{m-1}+\cdots+a_m$ be a monic hyperbolic polynomial of degree $m$ and let $q(\zeta)=p'(\zeta)$. Here $a_j(t, x,\xi)$ depend on $(t, x, \xi)$ but we omit this in the notations below. Let 
\[
h_{p,q}(\zeta,\bar\zeta)=\frac{p(\zeta)q(\bar\zeta)-p(\bar\zeta)q(\zeta)}{\zeta-\bar\zeta}=\sum_{i,j=1}^{m}h_{ij}\zeta^{i-1}{\bar\zeta}^{j-1}
\]
be the B\'ezout form of $p$ and $q$. It is well known that the matrix $H=(h_{ij})$ is nonnegative definite (see for example \cite{I0}).

Consider the Sylvester matrix $A_p$ corresponding to $p(\zeta)$ which has the form
\[
A_p=\left(\begin{array}{ccccc}
0&1&\cdots&0\\
\vdots&\vdots&\ddots\\
0&0&\cdots&1\\
-a_m&-a_{m-1}&\cdots&-a_1
\end{array}\right).
\]
One has the following result \cite{N4} and for the sake of completeness we present the proof.
\begin{lem}{\rm (}\cite[Lemma 2.3.1]{N4}{\rm )}
$H$  symmetrizes $A_p$ and ${\rm det }\,H=\Delta^2$ where $\Delta$ is the
difference-product of the roots  of $p(\tau) = 0$.
\end{lem}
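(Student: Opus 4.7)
The plan is to simultaneously diagonalize $H$ and $A_p$ by the Vandermonde matrix of the roots of $p$, reducing both claims to elementary algebra; both identities then extend by continuity from the open dense set where the roots are distinct. First, let $\tau_1,\ldots,\tau_m$ denote the (for now, distinct) roots of $p$, let $V$ be the $m\times m$ Vandermonde matrix with entries $V_{ik}=\tau_k^{i-1}$, and set $\Lambda=\mathrm{diag}(\tau_1,\ldots,\tau_m)$. Using the shape of the companion matrix, the $k$-th column $v_k=(1,\tau_k,\ldots,\tau_k^{m-1})^{T}$ of $V$ satisfies $A_p v_k=\tau_k v_k$ (the bottom entry uses $p(\tau_k)=0$), so $A_pV=V\Lambda$.

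The crucial step is to substitute $\zeta=\tau_k$, $\bar\zeta=\tau_l$ into the defining identity of $h_{p,q}$. Since $p(\tau_k)=p(\tau_l)=0$, the left-hand side vanishes for $k\neq l$, while the removable singularity at $k=l$ gives the limit $q(\tau_k)^2=p'(\tau_k)^2$. Reading off the right-hand side, this yields
\[
V^{T}HV=D,\qquad D=\mathrm{diag}\bigl(p'(\tau_1)^2,\ldots,p'(\tau_m)^2\bigr).
\]
Both conclusions of the lemma now follow. Multiplying $A_pV=V\Lambda$ on the left by $V^{T}H$ gives
\[
V^{T}HA_pV=V^{T}HV\Lambda=D\Lambda,
\]
which is diagonal and hence symmetric; since $V$ is invertible, $HA_p$ itself is symmetric, which is the symmetrizer identity $HA_p=A_p^{T}H$. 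For the determinant, take $\det$ in $V^{T}HV=D$: with $\det V=\prod_{i<j}(\tau_j-\tau_i)=\Delta$ and $p'(\tau_k)=\prod_{j\neq k}(\tau_k-\tau_j)$, a short sign count gives $\prod_{k}p'(\tau_k)^2=\Delta^{4}$, so $(\det V)^{2}\det H=\Delta^{4}$ yields $\det H=\Delta^{2}$.

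Finally, both $HA_p-A_p^{T}H=0$ and $\det H=\Delta^{2}$ are polynomial identities in the coefficients $a_1,\ldots,a_m$ of $p$ (the entries of $H$ are polynomial in the $a_j$, and $\Delta^{2}$ is the classical discriminant, also a polynomial in the $a_j$). Having been verified on the open dense set of hyperbolic polynomials with distinct roots, they extend by continuity to all monic hyperbolic $p$, completing the proof. The only step in which the specific structure of the B\'ezout form really enters is the simultaneous diagonalization $V^{T}HV=D$; everything else is routine linear algebra and a standard density argument, so this is where I would expect the bulk of the verification effort to go (in particular, correctly interpreting the limit $\bar\zeta\to\tau_k$ in the coincident case so that the diagonal of $D$ comes out as $p'(\tau_k)^2$ and not a multiple thereof).
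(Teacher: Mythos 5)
Your proof is correct, and it reaches the same structural goal as the paper's --- simultaneous diagonalization of $H$ and $A_p$ by the Vandermonde matrix, followed by a limiting argument for multiple roots --- but the key step is executed differently, and more economically. The paper establishes the diagonalization indirectly: it factors $H={}^{t}CC$ where $C$ is built from the cofactor matrix of the Vandermonde matrix $R$ (via an explicit identification $c_{ij}=(-1)^{i+j}\sigma_{m-j,i}$ of the cofactors with elementary symmetric functions of the roots), then deduces $C={(\det R)}D^{-1}R^{-1}$, that $CA_pC^{-1}$ is diagonal, and hence that $HA_p$ is symmetric, with $|\det C|=|\Delta|$ giving $\det H=\Delta^2$. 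You instead read the relation $V^{T}HV=\mathrm{diag}\bigl(p'(\tau_k)^2\bigr)$ directly off the defining identity of the B\'ezout form by evaluating at pairs of roots (with the correct treatment of the removable singularity on the diagonal, where the limit is $q(\tau_k)\,p'(\tau_k)=p'(\tau_k)^2$); this bypasses the cofactor computation entirely and makes both the symmetry of $HA_p$ and the determinant formula immediate. The trade-off is that the paper's factorization $H={}^{t}CC$ also exhibits the positive semidefiniteness of $H$ as a byproduct (a fact the paper uses, though it is not part of the lemma's statement), whereas your argument does not; conversely, your evaluation-at-roots identity is shorter and less error-prone. For the degenerate case the paper invokes Nuij's perturbation $p_\epsilon=(1+\epsilon\,\partial_\zeta)^{m-1}p$ and passes to the limit, while you observe that both assertions are polynomial identities in the coefficients verified on a nonempty open set; both closings are valid.
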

\begin{proof} We first treat the case when $p(\zeta)$ is a strictly hyperbolic polynomial. Let $\lambda_j$, $j=1,\ldots,m$ be the different roots of the equation $p(\zeta)=0$. Write $p(\zeta)=\prod_{j=1}^m(\zeta-\lambda_j)$ and set
\[
\sigma_{\ell,k}=\sum_{1\leq j_1<\cdots<j_{\ell}\leq m, j_p\neq k}\lambda_{j_1}\cdots\lambda_{j_{\ell}}.
\]
 Since $p'(\zeta)=\sum_{k=1}^m\prod_{j=1, j\neq k}^m(\zeta-\lambda_j)=\sum_{i=1}^m(-1)^{m-i}\sigma_{m-i,k}\zeta^{i-1}$ it is easy to see 
 \[
 h_{ij}=\sum_{k=1}^m(-1)^{i+j}\sigma_{m-i,k}\sigma_{j-1,k}.
 \]
 Denote by $R$ the Vandermonde's matrix having the form
 \[
 R=\left(\begin{array}{ccccc}
1&1&\cdots&1\\
\lambda_1&\lambda_2 &\cdots&\lambda_m\\
\vdots&\vdots&\ddots&\vdots\\
\lambda_1^{m-1}&\lambda_2^{m-1}&\cdots&\lambda_m^{m-1}
\end{array}\right).
\]
Since $\lambda_i\neq \lambda_j$, $i\neq j$, the matrix $R$ is invertible and $|{\rm det}\,R|=|\Delta|$. It is clear that
\[
A_pR=R\left(\begin{array}{ccc}
\lambda_1&\\
&\ddots&\\
&&\lambda_m
\end{array}\right).
\]
Denote by $^{co}\!R=(r_{ij})$ the cofactor matrix of $R$ and by $\Delta(
\lambda_1,\ldots,\lambda_k)$ the difference-product of $\lambda_1,\ldots,\lambda_k$.
 It is easily seen that $r_{ij}$ is divisible by $\Delta_i=\Delta(\lambda_1,\ldots,\lambda_{i-1},\lambda_{i+1},\ldots,\lambda_m)$, hence
 \begin{equation}
 \label{eq:2.1}
 r_{ij}=c_{ij}(\lambda_1,\ldots,\lambda_{i-1},\lambda_{i+1},\ldots,\lambda_m)\Delta_i.
 \end{equation}
Since $r_{ij}$ and $\Delta_i$ are alternating polynomials in $(\lambda_1,\ldots,\lambda_{i-1},\lambda_{i+1},\ldots,\lambda_m)$ of degree $m(m-1)/2-j+1$ and  $(m-1)(m-2)/2$ respectively, then $c_{ij}$ is a symmetric polynomial of degree
\[
 m-j=m(m-1)/2-j+1-(m-1)(m-2)/2.
 \]
  Therefore   $c_{ij}$ is a polynomial in fundamental symmetric polynomials of $(\lambda_1,\ldots,\lambda_{i-1},\lambda_{i+1},\ldots,\lambda_m)$.  Noting that $\Delta_i$ is of degree $m-2$ and $r_{ij}$ ($j\neq m$) is of degree $m-1$ respectively with respect to $\lambda_{\ell}$ ($\ell\neq i$),  one concludes that $c_{ij}$ is of degree $1$ with respect to $\lambda_{\ell}$ ($\ell\neq i$) which proves that
\begin{equation}
\label{eq:2.2}
c_{ij}=(-1)^{i+j}\sigma_{m-j,i}.
\end{equation}
Thus denoting $C=(c_{ij})$ we have $^{t}CC=(h_{ij})=H$. In particular, this shows that the symmetric matrix $H$ is nonnegative definite  as it was mentioned above.

Set $D={\rm diag}\,(\Delta_1,\ldots,\Delta_m)$ and note that $D$ is invertible. Moreover it follows from \eqref{eq:2.1} that $C=D^{-1}(^{co}\!R)=({\rm det}R)D^{-1}R^{-1}$ and hence 
\[
CA_pC^{-1}=D^{-1}(R^{-1}A_pR)D.
\]
It is clear that $CA_pC^{-1}$ is a diagonal matrix because both $R^{-1}A_pR$ and $D$ are diagonal matrices. Then $CA_pC^{-1}={^t}C^{-1}\,{^t}\!A_p{^t}C$ yields ${^t}CCA_p={^t}\!A_p{^t}CC$ which proves that $HA_p$ is symmetric. From $C=({\rm det}R)D^{-1}R^{-1}$ it follows that
\[
C={\rm diag}\,\Big(\pm \prod_{k\neq 1}(\lambda_i-\lambda_k),
\pm \prod_{k\neq 2}(\lambda_i-\lambda_k),\ldots, \pm\prod_{k\neq m}(\lambda_i-\lambda_k)\Big)R^{-1}
\]
and hence $|{\rm det}\,C|=|\prod_{j=1}^m\prod_{k\neq j}^m(\lambda_k-\lambda_j)|/|\Delta|=|\Delta|$. Consequently, ${\rm det}\,H=\Delta^2$ and this completes the proof for strictly hyperbolic polynomial $p(\zeta).$

Passing to the general case, introduce the polynomial
$$p_{\ep}(\zeta) = \Bigl(1 + \ep \frac{\pa}{\pa \zeta}\Bigr)^{m-1} p(\zeta), \: \ep \neq 0.
$$
According to \cite{Nuij}, $p_{\ep}(\zeta)$ is strictly hyperbolic and let $H_{\ep}=$ $^{t}C_{\ep} C_{\ep}$ be the symmetrizer for $A_{p_{\ep}}$ constructed above. Obviously, as $\ep \to 0$, we have $A_{p_{\ep}} \rightarrow A_p$ since the coefficients of $p_{\ep}(\zeta)$ go to the ones of $p(\zeta)$. The roots of $p(\zeta)$ depend continuously on the coefficients and this yields
$\lambda_{j, \ep} \rightarrow \lambda_j$, $\lambda_{j, \ep}$ being the roots of $p_{\ep}(\zeta) = 0.$ The equalities (\ref{eq:2.2}) imply $C_{\ep} \rightarrow C$ and passing to the limit $\ep \to 0$, we obtain the result.
\end{proof}  
Note that $H$ is different from the Leray's symmetrizer (\cite{Le}) since if $B$ is the Leray's symmetrizer, then ${\rm det}\,B=\Delta^{2(m-1)}$. 
 Now consider
\[
{\tilde A}_p=\left(\begin{array}{ccccc}
-a_1&-a_2&\cdots&-a_m\\
1&0 &\cdots&0\\
\vdots&\ddots&\ddots\\
0&\cdots&1&0
\end{array}\right).
\]
 \begin{cor} 
 \label{cor:kusatu}Let $J=(\delta_{i,m+1-j}),$ where $\delta_{ij}$ is the Kronecker's delta. Then ${\tilde H}=JH\,{^t}\!J$ symmetrizes ${\tilde A}_p$ and ${\rm det}\,{\tilde H}=\Delta^2$.
 \end{cor}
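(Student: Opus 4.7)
The plan is to reduce everything to Lemma 2.1 by observing that $\tilde A_p$ is conjugate to $A_p$ via the involution $J$, and that the map $M \mapsto JMJ$ (since $J = {}^t\!J$ and $J^2 = I$) preserves symmetry, positive semidefiniteness and the absolute value of the determinant.

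First I would record the basic properties of $J$: from $J_{ij} = \delta_{i, m+1-j}$ one reads off $J_{ij} = J_{ji}$, so $J = {}^t\!J$, and a direct computation gives $J^2 = I$, hence $J^{-1} = J$. In particular $(\det J)^2 = 1$.

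Next I would verify the key identity $\tilde A_p = J A_p J$. Left multiplication by $J$ reverses the rows of a matrix and right multiplication by $J$ reverses the columns. Applied to $A_p$, whose rows are $e_2, e_3, \ldots, e_m, (-a_m, -a_{m-1}, \ldots, -a_1)$, this turns the bottom row into the top row $(-a_1, -a_2, \ldots, -a_m)$ and moves each $e_k$ ($k \geq 2$) into the position and shape required by $\tilde A_p$. This is a short bookkeeping check.

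With this identity in hand, the corollary falls out:
\begin{equation*}
\tilde H \tilde A_p = (JH\,{^t\!J})(JA_pJ) = J(HA_p)J,
\end{equation*}
and since $HA_p$ is symmetric by Lemma 2.1 and conjugation by the symmetric involution $J$ preserves symmetry, $\tilde H \tilde A_p$ is symmetric. Symmetry and nonnegativity of $\tilde H$ itself follow from $\tilde H = J ({}^t\!C C) {^t\!J} = {}^t(CJ)(CJ)$. Finally,
\begin{equation*}
\det \tilde H = (\det J)^2 \det H = \det H = \Delta^2.
\end{equation*}

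There is no real obstacle here; the only point that requires a moment of care is the combinatorial verification that $JA_pJ$ equals $\tilde A_p$, after which the symmetry and determinant statements are immediate consequences of Lemma 2.1 together with $J = {}^t\!J$ and $J^2 = I$.
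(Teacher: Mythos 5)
Your proof is correct and follows essentially the same route as the paper, which simply notes that ${\tilde A}_p=JA_p\,{^t}\!J$ and ${^t}\!JJ=I$ and declares the rest immediate; you have merely written out the details (conjugation by the symmetric involution $J$ preserves symmetry and nonnegativity, and $(\det J)^2=1$ gives $\det\tilde H=\Delta^2$). Nothing to add.
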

 \begin{proof}
 Since ${\tilde A}_p=JA_p\,{^t}\!J$ and ${^t}\!JJ=I$ the proof is immediate.
 \end{proof}

With $U={^t}\big((D_t-\op(\phi)\lr{D})^2u, \lr{D}(D_t-\op(\phi)\lr{D})u, \lr{D}^2u\big)$ the equation $Pu=f$ is reduced
\begin{equation}
\label{eq:redE}
D_tU=\op(\varphi) \lr{D}U +(\op(A)\lr{D}+\op(B))U+F,
\end{equation}
where $F={^t}(f,0,0)$ and
\[
 A(t, x, \xi)=\left(\begin{array}{ccc}
- a  & b &  -c\\
1&0&0\\
0&1&0\end{array}\right),\quad B(t, x, \xi)= \left(\begin{array}{ccc}
b_{10}&b_{11}&b_{12}\\
0&b_{21}&0\\
0&0&b_{32}\end{array}\right),
\]
where $b_{ij}\in S^0_{1,0}$.

Introduce
\[
S(t,x,\xi)=\frac{1}{3}\left(\begin{array}{ccc}
3&2a&-b\\[4pt]
2a&2(a^2+b)&-ab-3c\\[4pt]
-b&-ab-3c&b^2-2ac\end{array}\right)
\]
which is a representation matrix (conjugated by $J$ in Corollary \ref{cor:kusatu}) of the  B\'ezout form of $p(\tau)=\tau^3+a\tau^2-b\tau+c$ and $p'(\tau)$ (see for example \cite{I0}, \cite{N3}). Therefore $S$ symmetrizes $A$ so that  
\begin{equation}
\label{eq:taisho}
S(t,x,\xi)A(t,x,\xi)=\frac{1}{3}\left(\begin{array}{ccc}
-a&2b&-3c\\
2b&ab-3c&-2ac\\
-3c&-2ac&bc\end{array}\right).
\end{equation}

Note that when $c=0$ one has
\[
S_0(t,x,\xi)=\frac{1}{3}\left(\begin{array}{ccc}
3&2a&-b\\[4pt]
2a&2(a^2+b)&-ab\\[4pt]
-b&-ab&b^2\end{array}\right)
\]
and hence 
\[
{\rm det}\,S_0(t,x,\xi)=\frac{1}{27}b^2(a^2+4b).
\].

\begin{lem}
\label{lem:setudo} There exist $\bar\epsilon>0$ and $\delta >0$ such that 
\[
{\rm det}\,S\geq \delta b^2(a^2+b)
\]
if $|ac|\leq \bar\ep\, b^2$ and $|c|\leq \bar\ep\, b^{3/2}$.
\end{lem}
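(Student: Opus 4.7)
The plan is to compute $\det S$ explicitly and then view it as a controlled perturbation of $\det S_0 = \frac{1}{27}b^2(a^2+4b)$, using the two smallness hypotheses to dominate the correction by a fraction of the leading term. First I would expand $\det S$ directly. Either by a straightforward cofactor expansion along the first row of $3S$, or by invoking Corollary \ref{cor:kusatu} (since $S$ coincides, up to the scaling $\frac{1}{3}$ absorbed as $\frac{1}{27}$ in the determinant, with the symmetrizer $\tilde H$ associated to $p(\tau)=\tau^3+a\tau^2-b\tau+c$, so that $27\det S$ equals the classical discriminant of $p$), one gets
\[
27 \det S = b^2(a^2+4b) - 4a^3 c - 18abc - 27c^2.
\]

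The second step is to estimate the three $c$-dependent correction terms using the hypotheses. Writing $4a^3 c = 4a^2\cdot ac$ and $18abc = 18b\cdot ac$, the bound $|ac|\leq \bar\epsilon\, b^2$ gives
\[
|4a^3 c| \leq 4\bar\epsilon\, a^2 b^2, \qquad |18abc|\leq 18\bar\epsilon\, b^3,
\]
while $|c|\leq \bar\epsilon\, b^{3/2}$ (which implicitly forces $b\geq 0$) yields $|27c^2|\leq 27\bar\epsilon^2\, b^3$. Combining these,
\[
27\det S \geq (1-4\bar\epsilon)\, a^2 b^2 + (4 - 18\bar\epsilon - 27\bar\epsilon^2)\, b^3.
\]

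Finally I would fix $\bar\epsilon$ small enough that both coefficients on the right are bounded below by a positive constant (any $\bar\epsilon<1/10$ suffices comfortably). Since $b\geq 0$ we have $b^2(a^2+b)\leq a^2 b^2 + b^3$, so the above lower bound dominates a positive multiple of $b^2(a^2+b)$, giving $\det S \geq \delta\, b^2(a^2+b)$ with an explicit $\delta>0$. The one place that needs genuine care is the algebraic expansion of $\det S$: getting the signs and numerical constants of $-4a^3c-18abc-27c^2$ correct. Beyond that, the argument is purely an exercise in choosing $\bar\epsilon$ so that the two mixed perturbations $|4a^3c|$ (of type $a^2 b^2$) and $|18abc|+|27c^2|$ (of type $b^3$) are simultaneously absorbed by the corresponding terms of $b^2(a^2+4b)$.
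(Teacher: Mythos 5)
Your proposal is correct and follows essentially the same route as the paper: expand $27\det S$ as $b^2(a^2+4b)-4a^3c-18abc-27c^2$, bound the three correction terms by $4\bar\epsilon\,a^2b^2$, $18\bar\epsilon\,b^3$ and $27\bar\epsilon^2 b^3$ using the hypotheses, and choose $\bar\epsilon$ small (the paper takes $\bar\epsilon=1/50$). The algebra and the absorption argument match the paper's proof exactly.
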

\begin{proof}
Note that
\[
{\rm det}\,S={\rm det}\,S_0+\frac{1}{27}\big\{-4a^3c-18abc-27c^2\}.
\]
Since
\begin{align*}
|a^3c|\leq \bar\ep\, b^2a^2,\;\;
|abc|\leq \bar\ep\, b^3,\;\;
|c^2|\leq \bar\ep^2\,b^3
\end{align*}
choosing $\bar\ep=1/50$ for instance, the assertion is clear.
\end{proof}

\begin{lem}
\label{lem:G2}  There exist $\bar\ep>0$ and $\ep_1>0$ such that
\[
S(t,x,\xi)\gg \ep_1 t\left(\begin{array}{ccc}
1&0&0\\
0&1&0\\
0&0&b\end{array}\right)=\ep_1 t J,
\]
 provided  $|ac|\leq \bar\ep\, b^2$ and $|c|\leq \bar\ep\, b^{3/2}$.
\end{lem}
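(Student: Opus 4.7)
The plan is to verify Sylvester's criterion for $S - \ep_1 t J$ to be positive semidefinite, i.e., to show that its three leading principal minors are all nonnegative. The analysis will be driven by the constraints $b \geq \delta_1 t$, $|c| \leq \bar\ep b^{3/2}$ and $|ac| \leq \bar\ep b^2$, together with the boundedness of $a,b$ as zero-order symbols.

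First, the $(1,1)$ entry of $S - \ep_1 t J$ equals $1 - \ep_1 t$, clearly positive for small $\ep_1$. Next I would compute the $2\times 2$ leading principal minor as
\[ \tfrac{2(a^2+3b)}{9} - \ep_1 t\Bigl[1 + \tfrac{2(a^2+b)}{3}\Bigr] + \ep_1^2 t^2, \]
and use $b \geq \delta_1 t$ to get $\tfrac{2(a^2+3b)}{9} \geq \tfrac{2\delta_1 t}{3}$, which absorbs the $O(\ep_1 t)$ correction once $\ep_1$ is small.

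For the $3\times 3$ determinant I would expand by column-multilinearity in $J = \mathrm{diag}(1,1,b)$:
\[ \det(S - \ep_1 t J) = \det S - \ep_1 t\bigl[\mathrm{cof}_{11}(S) + \mathrm{cof}_{22}(S) + b\,\mathrm{cof}_{33}(S)\bigr] + (\ep_1 t)^2 M - (\ep_1 t)^3 b, \]
where $M = S_{33} + b(S_{11}+S_{22})$. Lemma 2.2 yields $\det S \geq \delta b^2(a^2+b)$. A direct computation gives $\mathrm{cof}_{33}(S) = \tfrac{2(a^2+3b)}{9}$ (free of $c$), $\mathrm{cof}_{22}(S) = \tfrac{2}{9}(b^2 - 3ac)$, and $\mathrm{cof}_{11}(S) = \tfrac{1}{9}\bigl[2(a^2+b)(b^2 - 2ac) - (ab+3c)^2\bigr]$. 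The $c$-hypotheses then allow the uniform bound
\[ \mathrm{cof}_{11}(S) + \mathrm{cof}_{22}(S) + b\,\mathrm{cof}_{33}(S) \leq K\, b(a^2+b) \]
for some $K$ depending on $\bar\ep$ and the sup-norms of $a,b$. Combined with Lemma 2.2 and $b \geq \delta_1 t$ this yields $\det S - \ep_1 t \cdot K b(a^2+b) \geq (\delta b - K \ep_1 t)\, b(a^2+b) \geq 0$ for $\ep_1 \leq \delta \delta_1/K$. The quantity $M$ is nonnegative under the hypotheses so $(\ep_1 t)^2 M$ only helps, and since $M \geq (1-2\bar\ep)b^2/3$ one has $\ep_1 t b \leq M$ for $\ep_1$ small (using $b \geq \delta_1 t$), which absorbs the cubic correction $-(\ep_1 t)^3 b$ into the quadratic term.

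The main obstacle will be the cofactor bound $\mathrm{cof}_{11}(S) + \mathrm{cof}_{22}(S) + b\,\mathrm{cof}_{33}(S) \leq K b(a^2+b)$ under the $c$-perturbations. The hypotheses $|c| \leq \bar\ep b^{3/2}$ and $|ac| \leq \bar\ep b^2$ are precisely tuned so that each $c$-correction in $\mathrm{cof}_{11}(S)$ and $\mathrm{cof}_{22}(S)$ sits on the same $b(a^2+b)$-scale as the $c=0$ contribution, in effect guaranteeing $\mathrm{tr}(J\,\mathrm{adj}\,S) = O(\det S / b)$. The estimate $b \geq \delta_1 t$ then converts this $b$-scale spectral gap into the $t$-scale required by the lemma.
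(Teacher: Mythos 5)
Your proposal follows essentially the same route as the paper: both reduce the claim to showing that the minors of $S-\ep_1 tJ$ stay nonnegative, using $\det S\geq\delta\, b^2(a^2+b)$ from Lemma \ref{lem:setudo} together with $b\geq\delta_1 t$ to absorb every $\mathcal{O}(\ep_1 t)$ correction. Your multilinear/cofactor expansion of $\det(S-\ep_1 tJ)$ is a tidier bookkeeping of what the paper does term by term, and your cofactor formulas as well as the bound $\mathrm{cof}_{11}(S)+\mathrm{cof}_{22}(S)+b\,\mathrm{cof}_{33}(S)\leq K\,b(a^2+b)$ do check out under $|ac|\leq\bar\ep\, b^2$ and $|c|\leq\bar\ep\, b^{3/2}$.

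One point needs repair: you invoke the wrong criterion. Nonnegativity of the \emph{leading} principal minors does not imply positive semidefiniteness (consider $\mathrm{diag}(0,-1)$, whose leading minors are $0$ and $0$). The paper verifies \emph{all} principal minors --- it explicitly treats the three $2\times 2$ minors corresponding to the index sets $\{1,2\}$, $\{1,3\}$ and $\{2,3\}$ --- which is the correct test for semidefiniteness. Your argument is nonetheless salvageable exactly as computed: for $t>0$ your own lower bounds give \emph{strict} positivity of the three leading minors (the first is $1-\ep_1 t>0$, the second is $\geq 2b/9>0$, and the third is $\geq(\delta\delta_1-K\ep_1)t\,b(a^2+b)>0$ since $b\geq\delta_1 t>0$), so Sylvester's criterion for positive \emph{definiteness} applies; at $t=0$ the perturbation vanishes and the assertion degenerates to $S\geq 0$, which is the known nonnegativity of the B\'ezout form. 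Either state the argument this way or add the two missing $2\times 2$ principal minors as in the paper.
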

\begin{proof}
  Since 
\[
3S-\ep_1 t J=\left(\begin{array}{ccc}
3-\ep_1 t&2a&-b\\
2a&2a^2+2b-\ep_1 t&-ab-3c\\
-b&-ab-3c&b^2-\ep_1 tb-2ac\end{array}\right),
\]
one obtains
\[
{\rm det}\,(3S-\ep_1 tJ)={\rm det}\,3S+\ep_1 \co \big(b^2(b+a^2)\big).
\]
Indeed
\begin{align*}
(3-\ep_1 t)(2a^2+2b-\ep_1 t)(b^2-\ep_1 tb- 2 ac)=3(2a^2+2b)(b^2- 2 ac)+\ep_1\co \big(tb(b+a^2)\big),\\
b^2(2a^2+2b-\ep_1 t)=b^2(2a^2+2b)+\ep_1\co \big(t b(b+a^2)\big),\\
4a^2(b^2-\ep_1 t b-2ac)=4a^2(b^2-2ac)+\ep_1\co\big(tba^2\big),\\
(3-\ep_1t)(ab+3c)^2=3(ab+3c)^2+\ep_1\co\big(tb^2).
\end{align*}
Noting $b\geq \delta_1 t$, one gets the above representation and we deduce $\det(3S - \ep_1 tJ) \geq 0$ for small $\ep_1.$
In the same way one treats the principal minors of order 2. For example
\[
(3- \ep_1 t)(2a^2 + 2b - \ep_1 t) - 4a^2 = 2a^2 + 6b - \ep_1 t(2a^2 + 2b) + \ep_1^2 t^2 \geq 2(a^2 + b)(1- \ep_1 t) \geq 0,
\]
\begin{align*}
(3- \ep_1 t)(b^2 - \ep_1 t b - 2 ac) - b^2 = 2b^2 - 6 ac - \ep_1 t(b^2 - 2 ac + 3b) 
+ \ep_1^2 t^2b\\
\geq b^2 - 4ac - 3\ep_1 t b+ (b^2- 2 ac)(1 - \ep_1 t) \\
\geq (1-4\bar\ep)b^2-3\ep_1tb+(1-2\bar\ep)(1-\ep_1t)b^2\geq 0,
\end{align*}
\begin{align*}
(2 a^2 + 2b - \ep_1 t)(b^2 - \ep_1 t b - 2 ac) - (ab+ 3c)^2 \geq a^2 b^2 + 2b^3 - 10 abc - 9c^2 - 4 a^3 c \\
 - 3\ep_1 t b^2 + 2 \ep_1 t ac - 2 \ep_1 t ba^2 \\
 \geq (1-4\bar\ep)a^2b^2+(2-10\bar\ep-9\bar\ep^2)b^3- (3\ep_1  + 2 \ep_1 \bar\ep) t b^2 - 2 \ep_1 t ba^2\geq 0
\end{align*}
since all terms involving $\ep_1 t$ can be compensated by $a^2 b^2 + 2 b^3.$
\end{proof}

\begin{lem}
\label{lem:kagi} Assume $\la \xi \ra^{\al}c_{(\be)}^{(\al)} = \co(\sqrt{b})$ for $|\al+ \be| = 2$ and $\la \xi\ra^{\al}(ac)_{(\be)}^{(\al)} = \co(\sqrt{b})$ for $|\al+ \be| = 3.$ There exists $C>0$ such that for $U \in C^{\infty}(\R_t:C_0^{\infty}(\R^n))$ we have
\[
{\mathsf{Re}}({\rm Op}(S)U,U)\geq \ep_1 t\big(\sum_{j=1}^2\|U_j\|^2+({\rm Op}(b)U_3,U_3)\big)-Ct^{-1}\|\lr{D}^{-1}U\|^2.
\]
\end{lem}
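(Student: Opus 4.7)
The aim is to extract the quadratic form $\ep_1 t(\|U_1\|^2+\|U_2\|^2+(\op(b)U_3,U_3))$ by applying a sharp G\aa{}rding-type estimate to the matrix symbol $R:=S-\ep_1 tJ$ which, by Lemma~\ref{lem:G2}, is pointwise non-negative; the hypotheses $|ac|\leq \bar\ep b^2$ and $|c|\leq\bar\ep b^{3/2}$ of Lemma~\ref{lem:G2} are implied by $c=\co(b^2)$ in \eqref{eq:miki} after possibly shrinking the conic neighborhood. Since $\op(J)$ has diagonal entries $I,I,\op(b)$, the identity $\mathsf{Re}(\op(J)U,U)=\|U_1\|^2+\|U_2\|^2+\mathsf{Re}(\op(b)U_3,U_3)$ reduces the claim to the single estimate
\[
\mathsf{Re}(\op(R)U,U)\geq -Ct^{-1}\|\lr{D}^{-1}U\|^2.
\]

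A naive approach would be to apply matrix sharp G\aa{}rding directly to $R\in S^0_{1,0}$ (the second-order $S^0_{1,0}$-seminorms of $R$ are controlled by the hypotheses $\partial^\al c=\co(\sqrt{b})=\co(1)$ for $|\al|=2$ and $\partial^\al(ac)=\co(\sqrt{b})=\co(1)$ for $|\al|=3$, together with $\partial c=\co(b)$ from \eqref{eq:miki}), obtaining $\mathsf{Re}(\op(R)U,U)\geq -C_0\|U\|^2_{H^{-1/2}}$, and then interpolating $\|U\|^2_{H^{-1/2}}\leq \mu\|U\|^2+\mu^{-1}\|\lr{D}^{-1}U\|^2$ with $\mu=\ep_1 t/(2C_0)$. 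The $U_1,U_2$ components of $\mu\|U\|^2$ are absorbed into $\ep_1 t(\|U_1\|^2+\|U_2\|^2)$ and the $\mu^{-1}$ coefficient produces the desired $Ct^{-1}$ prefactor, but absorbing $\mu\|U_3\|^2$ into $\ep_1 t\,\mathsf{Re}(\op(b)U_3,U_3)$ fails pointwise since $b$ can be as small as $\delta_1 t$.

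To resolve this obstruction I would apply sharp G\aa{}rding not to $R$ but to the conjugated matrix symbol $\tilde R:=WRW$ with a smoothed diagonal weight $W=\mathrm{diag}(1,1,(b+\eta^2)^{-1/2})$; by Lemma~\ref{lem:G2} the conjugated symbol still satisfies $\tilde R-\ep_1 t\, I\geq 0$, and its entries remain bounded thanks to $c=\co(b^2)$ and $ac=\co(b^2)$. After sharp G\aa{}rding on $\tilde R$ and unwinding the conjugation, the commutators between $\op(W)$ and $\op(R)$ pick up factors $\partial W=\co(b^{-3/2}\partial b)=\co(t^{-1/2})$, which when squared yield exactly the $t^{-1}$ loss in the final remainder. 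The main obstacle will be controlling these commutator estimates, which require two extra derivatives of the entries of $R$ beyond what the naive sharp G\aa{}rding uses; the finite-derivative assumptions $\partial^2 c=\co(\sqrt{b})$ and $\partial^3(ac)=\co(\sqrt{b})$ in the statement are calibrated precisely so that these additional derivatives remain bounded after multiplication by $\partial W$, and letting $\eta\to 0$ at the end completes the argument.
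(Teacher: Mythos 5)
Your opening reduction is sound and matches the paper's: writing ${\mathsf{Re}}(\op(S)U,U)={\mathsf{Re}}(\op(S-\ep_1 tJ)U,U)+\ep_1 t\big(\sum_{j=1}^2\|U_j\|^2+(\op(b)U_3,U_3)\big)$, and you correctly diagnose why naive matrix sharp G\aa{}rding fails (one cannot absorb $\mu\|U_3\|^2$ into $\ep_1 t(\op(b)U_3,U_3)$ when $b\sim\delta_1 t$). But the conjugation fix has genuine gaps. First, the derivative count for $W=\mathrm{diag}(1,1,(b+\eta^2)^{-1/2})$ is wrong: the only available bound on first derivatives of $b$ is the Glaeser-type $\partial b=\co(\sqrt{b})$ (nothing in \eqref{eq:miki} gives $\partial b=\co(b)$), so $\partial W_{33}=\co\big((b+\eta^2)^{-3/2}\sqrt{b}\big)=\co(b^{-1})=\co(t^{-1})$, not $\co(t^{-1/2})$. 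The composition errors in $\op(W)^*\op(R)\op(W)-\op(WRW)$ then contain order $-1$ off-diagonal symbols of size at best $\co(\partial W_{33}\,\partial R_{3j})=\co(t^{-1/2})$, and Cauchy--Schwarz on such a term yields $\ep t\|U\|^2+C t^{-2}\|\lr{D}^{-1}U\|^2$, a $t^{-2}$ loss incompatible with the stated $t^{-1}$ remainder. Second, $\tilde R-\ep_1 t I\geq 0$ is false for fixed $\eta>0$ and small $t$: it amounts to $S\geq \ep_1 tJ+\ep_1 t\,\mathrm{diag}(1,1,b+\eta^2)$, and the $(3,3)$ entry $(b^2-2ac)/3$ cannot dominate $\ep_1 t\eta^2$ when $b\sim\delta_1 t\ll\eta^2$; the constants are therefore not uniform in $\eta$ and the limit $\eta\to 0$ is unjustified. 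Third, even granting sharp G\aa{}rding on $\tilde R$, the remainder involves $\|\op((b+\eta^2)^{1/2})U_3\|^2_{H^{-1/2}}$, and converting this into $\mu(\op(b)U_3,U_3)+Ct^{-1}\|\lr{D}^{-1}U_3\|^2$ is itself a Fefferman--Phong-type statement you have not supplied --- it is exactly where the difficulty lives.

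The paper's proof takes a different, more explicit route: it uses the Friedrichs symmetrization of $Q=S-2\ep_1 tJ$ (nonnegative by Lemma \ref{lem:G2}, the factor $2$ leaving slack to absorb errors), together with the expansion \eqref{eq:sGar} of $Q_F-\op(Q)$ in terms of $\psi_{\al\be}Q^{(\al)}_{(\be)}$, $2\leq|\al+\be|\leq 3$. All entries of $Q^{(\al)}_{(\be)}$ except the $(3,3)$ one are handled by weighted Cauchy--Schwarz; for the $(3,3)$ entry $g^{(\al)}_{(\be)}$ with $g=b^2-\ep_1 tb-2ac$ one writes $\psi_{\al\be}g^{(\al)}_{(\be)}=T\#\lr{\xi}^{-1}+S^{-2}$ and proves the scalar pointwise bound $T^2\leq Cb$ --- this is precisely where the hypotheses $\lr{\xi}^{\al}c^{(\al)}_{(\be)}=\co(\sqrt{b})$ for $|\al+\be|=2$ and $\lr{\xi}^{\al}(ac)^{(\al)}_{(\be)}=\co(\sqrt{b})$ for $|\al+\be|=3$ enter --- and then applies the scalar Fefferman--Phong inequality to $Cb-T^2\geq 0$ to bound $\|\op(T)U_3\|^2$ by $C(\op(b)U_3,U_3)+C\|\lr{D}^{-1}U_3\|^2$. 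To repair your argument you would still need this Fefferman--Phong step (or an equivalent) for the $(3,3)$ block, and you would need to abandon the global conjugation by $W$, since $(b+\eta^2)^{-1/2}$ is not a symbol with uniformly controlled derivatives.
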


\begin{proof} We will follow the argument of \cite[Section 3]{NP} and we use the notation $\pa_{\xi}^{\al}D_{x}^{\be}Q = Q^{(\al)}_{(\beta)}.$ Recall that we have the representation
\begin{equation}
\label{eq:sGar}
\begin{split}
Q_F-{\rm Op}(Q)={\rm Op}\Big(
\sum_{2\leq |\al+\be|\leq 3}\psi_{\al,\be}(\xi)Q^{(\al)}_{(\be)}\Big)+{\rm Op}(R)
\end{split}
\end{equation}
with  $R\in S^{-2}_{1/2,0}$
and real symbols $\psi_{\al,\be}\in S^{(|\al|-|\be|)/2},$ 
where $Q_F$ is the Freidrichs part of $Q$ (see \cite[Appendix]{NP}, \cite{Fr}) and hence $(Q_FU,U)\geq 0$.

Notice that $b$ is real, hence $({\rm Op}(b)U_3, U_3) = {\mathsf{Re}}\, ({\rm Op}(b) U_3, U_3).$ Setting $Q=S-2\ep_1 t J,$  we have 
$$
{\mathsf{Re}}\,({\rm Op}(S)U,U)={\mathsf{Re}}\,({\rm Op}(Q)U,U)+2\ep_1 t\big(\sum_{j=1}^2\|U_j\|^2+({\rm Op}(b)U_3,U_3)\big),
$$
and it is enough to prove
\begin{equation}
\label{eq:katuo}
\big|{\mathsf{Re}}({\rm Op}\Bigl(\sum_{2 \leq |\al + \be| \leq 3}\psi_{\al\be}Q_{(\be)}^{(\al)}\Bigr)U,U)\big|\leq \ep_1 t\big(\sum_{j=1}^2\|U_j\|^2+({\rm Op}(b)U_3,U_3)\big)+C\ep_1^{-1}t^{-1}\|\lr{D}^{-1}U\|^2.
\end{equation}
Indeed if this is true, then we have 
\begin{align*}
{\mathsf{Re}}({\rm Op}(Q)U,U)\geq (Q_FU,U)-\ep_1 t\big(\sum_{j=1}^2\|U_j\|^2+({\rm Op}(b)U_3,U_3)\big)\\-C\ep_1^{-1}t^{-1}\|\lr{D}^{-1}U\|^2
-C\|\lr{D}^{-1}U\|^2\\
\geq -\ep_1 t\big(\sum_{j=1}^2\|U_j\|^2+({\rm Op}(b)U_3,U_3)\big)-C\ep_1^{-1}t^{-1}\|\lr{D}^{-1}U\|^2.
\end{align*}
 Thus we conclude the assertion.
 
To prove \eqref{eq:katuo}, consider ${\mathsf{Re}}({\rm Op}(\psi_{\al\be}Q_{(\be)}^{(\al)})U,U)$ with $|\al+\be|=2$. Setting $g=b^2-\ep tb-2ac,$ one has
\begin{align*}
Q^{(\al)}_{(\be)}
=\left(\begin{array}{ccc}
0&S^{-|\al|}&S^{-|\al|}\\
S^{-|\al|}&S^{-|\al|}&S^{-|\al|}\\
S^{-|\al|}&S^{-|\al|}&g^{(\al)}_{(\be)}\end{array}\right).
\end{align*}
Here and below $S^{m}$ denotes some symbol in the class $S^{m}$.
This yields
\[
\psi_{\al\be}Q^{(\al)}_{(\be)}=\left(\begin{array}{ccc}
0&S^{-1}&S^{-1}\\
S^{-1}&S^{-1}&S^{-1}\\
S^{-1}&S^{-1}&\psi_{\al\be}g^{(\al)}_{(\be)}\end{array}\right)
\]
and hence
\begin{align*}
|({\rm Op}(\psi_{\al\be}Q^{(\al)}_{(\be)})U,U)|\leq \ep_1 t\sum_{j=1}^2\|U_j\|^2+C\ep_1^{-1}t^{-1}\|\lr{D}^{-1}U\|^2\\
+|{\mathsf{Re}}\,\big({\rm Op}\big(\psi_{\al\be}g^{(\al)}_{(\be)}\big)U_3,U_3\big)|.
\end{align*}
Let $
T=\psi_{\al\be}g^{(\al)}_{(\be)}\lr{\xi}.$ 
Then $\psi_{\al\be}g^{(\al)}_{(\be)}={\mathsf{Re}}\,(T\#\lr{\xi}^{-1})+S^{-2}$  and
\[
{\mathsf{Re}}\,({\rm Op}(\psi_{\al\be}g^{(\al)}_{(\be)})U_3,U_3)\leq \ep_1 t\|{\rm Op}(T)U_3\|^2+C\ep_1^{-1}t^{-1}\|\lr{D}^{-1}U_3\|^2.
\]
Note that $\|{\rm Op}(T)U_3\|^2=({\rm Op}(T\#T)U_3,U_3)$ and $T\#T=T^2+S^{-2}$. Therefore there exists $C>0$ such that 
\[
T^2\leq Cb
\]
because $\la \xi \ra^{\al}c^{(\al)}_{(\be)}=\co(\sqrt{b})$ and $\la \xi \ra^{\al}\Bigl(b(b- \ep_1 t)\Bigr)^{(\al)}_{(\be)} = \co(\sqrt{b})$ and $b\geq \delta t$. Applying the Fefferman-Phong inequality for the operator with symbol $Cb - T^2,$ one proves the assertion. 

For the case $|\al+\be|=3$
with $T_1=\psi_{\al\be}g^{(\al)}_{(\be)}\lr{\xi}^{3/2}$ we have  the inequality
\[
T_1^2\leq Cb
\]
with some $C>0$. Indeed, $\la \xi\ra^{\al}(ac)^{(\al)}_{(\be)}=\co(\sqrt{b})$ and $\la \xi \ra^{\al}\Bigl(b(b- \ep_1 t)\Bigr)^{(\al)}_{(\be)} = \co(\sqrt{b})$.  Repeating the above argument, we complete the proof.
\end{proof}

\begin{cor}
\label{cor:seiti:1}Let ${\tilde S}=S+\lambda\, t^{-1}\lr{\xi}^{-2}I$. Then there exists $\lambda_0>0$  such that for $\lambda \geq \lambda_0$ we have
\[
\begin{aligned}
{\mathsf{Re}}(\op({\tilde S})U,U)={\mathsf{Re}}(\op(S)U,U)+\lambda t^{-1}\|\lr{D}^{-1}U\|^2\\
\geq \ep_1 t\big(\sum_{j=1}^2\|U_j\|^2+({\rm Op}\:(b)U_3,U_3)\big)+(\lambda/2) t^{-1}\|\lr{D}^{-1}U\|^2.
\end{aligned}
\]

\end{cor}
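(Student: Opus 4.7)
The statement is an immediate consequence of Lemma \ref{lem:kagi}. The plan has two short steps.

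First, I would establish the equality on the first line. Since $\lambda t^{-1}\lr{\xi}^{-2} I$ is a scalar real symbol, its Weyl quantization is the self-adjoint operator $\lambda t^{-1}\lr{D}^{-2}$, and hence
\[
{\mathsf{Re}}(\op(\tilde S)U,U)={\mathsf{Re}}(\op(S)U,U)+\lambda t^{-1}(\lr{D}^{-2}U,U)={\mathsf{Re}}(\op(S)U,U)+\lambda t^{-1}\|\lr{D}^{-1}U\|^2,
\]
which gives exactly the equality claimed.

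Next, I would plug in the lower bound from Lemma \ref{lem:kagi}, which yields
\[
{\mathsf{Re}}(\op(\tilde S)U,U)\geq \ep_1 t\Bigl(\sum_{j=1}^2\|U_j\|^2+({\rm Op}(b)U_3,U_3)\Bigr)+(\lambda-C)\,t^{-1}\|\lr{D}^{-1}U\|^2,
\]
with $C$ the constant coming from Lemma \ref{lem:kagi}. Taking $\lambda_0=2C$, for every $\lambda\geq \lambda_0$ we have $\lambda-C\geq \lambda/2$, which produces the desired estimate.

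There is no real obstacle here: the corollary is structured precisely so that the bad term $-Ct^{-1}\|\lr{D}^{-1}U\|^2$ in Lemma \ref{lem:kagi} is absorbed by the added scalar perturbation. The only things to verify are that the scalar symbol $\lambda t^{-1}\lr{\xi}^{-2}I$ contributes a genuinely positive term (self-adjointness of $\lr{D}^{-2}$) and that the constant $C$ in Lemma \ref{lem:kagi} is independent of $\lambda$, which it is since $C$ arises only from the symbolic estimates on $S$ via the Friedrichs reduction in the proof of that lemma.
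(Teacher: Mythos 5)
Your proof is correct and is exactly the argument the paper intends (the corollary is stated without proof as an immediate consequence of Lemma \ref{lem:kagi}): the scalar term $\lambda t^{-1}\lr{\xi}^{-2}I$ contributes $\lambda t^{-1}\|\lr{D}^{-1}U\|^2$, and choosing $\lambda_0=2C$ absorbs the $-Ct^{-1}\|\lr{D}^{-1}U\|^2$ error from the lemma. Your remark that $C$ is independent of $\lambda$ is the only point needing verification, and it holds for the reason you give.
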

\begin{cor}
\label{cor:seiti:2}There exist $\delta_2>0$ and $\lambda_0>0$  such that 
\[
{\mathsf{Re}}(\op({\tilde S})U,U)\geq \delta_2 t^2\|U\|^2+(\lambda/2) t^{-1}\|\lr{D}^{-1}U\|^2,\quad \lambda\geq \lambda_0.
\]
\end{cor}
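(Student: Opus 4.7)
The plan is to start from Corollary~\ref{cor:seiti:1} and upgrade its $O(t)$ lower bound on $\mathsf{Re}(\op(\tilde S)U,U)$ to an $O(t^2)$ lower bound on $\|U\|^2$. For the first two components this is immediate: since $t\in[0,T]$, one has $\ep_1 t\geq (\ep_1/T)t^2$, whence $\ep_1 t\|U_j\|^2\geq (\ep_1/T)t^2\|U_j\|^2$ for $j=1,2$.

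The only nontrivial step is the $U_3$ component. I would apply the Fefferman--Phong inequality to the nonnegative $S^0_{1,0}$ symbol $b-\delta_1 t$ (nonnegativity is the first line of \eqref{eq:miki}), obtaining a constant $C_1>0$, uniform in $t\in[0,T]$, such that
\[
(\op(b-\delta_1 t)U_3,U_3)\geq -C_1\|\lr{D}^{-1}U_3\|^2.
\]
Multiplying by $\ep_1 t$ then yields $\ep_1 t(\op(b)U_3,U_3)\geq \ep_1\delta_1 t^2\|U_3\|^2-C_1\ep_1 t\|\lr{D}^{-1}U_3\|^2$, so $\delta_2 t^2\|U_3\|^2$ is produced at the cost of a negative error that must be absorbed into $(\lambda/2)t^{-1}\|\lr{D}^{-1}U\|^2$.

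Since $t\leq T$, one has $C_1\ep_1 t\leq (C_1\ep_1 T^2)\,t^{-1}$, so the error is dominated by $(C_1\ep_1 T^2)\,t^{-1}\|\lr{D}^{-1}U\|^2$. In order to retain the full $(\lambda/2)$ in the statement, rather than quoting Corollary~\ref{cor:seiti:1} as a black box (whose $\lambda/2$ has already consumed part of the $\lambda$ budget via Lemma~\ref{lem:kagi}), I would return to the identity $\mathsf{Re}(\op(\tilde S)U,U)=\mathsf{Re}(\op(S)U,U)+\lambda t^{-1}\|\lr{D}^{-1}U\|^2$ and combine it with Lemma~\ref{lem:kagi} and the Fefferman--Phong bound above in a single step. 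The residual coefficient of $t^{-1}\|\lr{D}^{-1}U\|^2$ then becomes $\lambda-C-C_1\ep_1 T^2$, where $C$ is the constant appearing in Lemma~\ref{lem:kagi}, and this is $\geq\lambda/2$ as soon as $\lambda\geq\lambda_0:=2(C+C_1\ep_1 T^2)$. With $\delta_2:=\min(\ep_1/T,\ep_1\delta_1)$ the inequality follows.

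There is no genuine obstacle: the only new ingredient beyond Corollary~\ref{cor:seiti:1} is one application of Fefferman--Phong to the nonnegative $S^0_{1,0}$ symbol $b-\delta_1 t$; the remainder is bookkeeping of positive constants. The sole point of care is to absorb the Fefferman--Phong error from $\ep_1 t\,\op(b)$ and the G\aa{}rding-type error from Lemma~\ref{lem:kagi} out of the \emph{same} $\lambda t^{-1}\|\lr{D}^{-1}U\|^2$ budget simultaneously, so that the claimed coefficient $\lambda/2$ (rather than, say, $\lambda/4$) actually survives in the final estimate.
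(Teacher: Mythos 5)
Your proof is correct and follows essentially the same route as the paper: the paper likewise applies the Fefferman--Phong inequality to the nonnegative symbol $b-\delta_1 t$ to obtain $(\op(b)U_3,U_3)\geq \delta_1 t\|U_3\|^2-C\|\lr{D}^{-1}U_3\|^2$ and then combines this with Corollary~\ref{cor:seiti:1}. Your additional bookkeeping---absorbing the Fefferman--Phong error and the error from Lemma~\ref{lem:kagi} out of the same $\lambda t^{-1}\|\lr{D}^{-1}U\|^2$ budget by enlarging $\lambda_0$---is a sensible tidying of a point the paper leaves implicit, not a different argument.
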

\begin{proof}Since  there exists $\delta_1>0$ such that $b\geq \delta_1 t$ from the Fefferman-Phong inequality for the scalar symbol $b - \delta_1 t$ one deduces 
\[
({\rm Op}\: (b)U_3,U_3)\geq \delta_1 t\|U_3\|^2-C\|\lr{D}^{-1}U_3\|^2
\]
which proves the assertion thanks to Corollary \ref{cor:seiti:1}.
\end{proof}
%

\section{Energy estimates}

Consider the energy $(t^{-N} e^{-\gam t}\op({\tilde S})U,U)$, where $(\cdot,\cdot)$ is the $L^2(\R^n)$ inner product and $N>0$, $\gam>0$ are positive parameters. Then one has
\begin{equation}
\label{eq:kiso}
\begin{split}
\dif_t(t^{-N}e^{-\gam t}\op({\tilde S})U,U)=-N(t^{-N-1} e^{-\gam t}\op({\tilde S})U,U)-\gam (t^{-N} e^{-\gam t}\op({\tilde S})U,U)\\
+ (t^{-N} e^{-\gam t}\op(\dif_t S)U,U)
-\lambda (N+1)t^{-N-2} e^{-\gam t}\|\lr{D}^{-1}U\|^2 - \lambda \gam t^{-N-1} e^{-\gam t}\|\lr{D}^{-1} U\|^2\\
 -2{\mathsf{Im}}\,(t^{-N} e^{-\gam t}\op({\tilde S})(\varphi \lr{D} +\op( A)\lr{D}+\op(B))U,U))-2{\mathsf{Im}}(t^{-N} e^{-\gam t}\op({\tilde S})F,U).
\end{split}
\end{equation}

Consider $S\#A\#\lr{\xi}-\lr{\xi}\#A^*\#S$. Note that
\[
S\#A=SA+\sum_{|\al+\be|=1}\frac{(-1)^{|\be|}}{2i}S^{(\al)}_{(\be)}A^{(\be)}_{(\al)}+\sum_{|\al+\be|=2}\cdots +S^{-3}.
\]
Writing $S=(s_{ij})$ one has
\[
\sum_{|\al+\be|=2}\cdots=\sum_{|\al+ \be| = 2}\cdots  \Big(s^{(\al)}_{ij(\be)}\Big)\left(\begin{array}{ccc}
-a^{(\be)}_{(\al)}&b^{(\be)}_{(\al)}&-c^{(\be)}_{(\al)}\\
0&0&0\\
0&0&0\end{array}\right)=\left(\begin{array}{ccc}
S^{-2}&S^{-2}&\co(\sqrt{b})S^{-2}\\
S^{-2}&S^{-2}&\co(\sqrt{b})S^{-2}\\
S^{-2}&S^{-2}&\co(\sqrt{b})S^{-2}\end{array}\right),
\]
because $c^{(\be)}_{(\al)}=\co(\sqrt{b})$ for $|\al+\be|=2$. Then 
\[
(S\#A)\#\lr{\xi}=(SA)\#\lr{\xi}+\Big(\sum_{|\al+\be|=1}\cdots\Big)\#\lr{\xi}+\left(\begin{array}{ccc}
S^{-1}&S^{-1}&\co(\sqrt{b})S^{-1}\\
S^{-1}&S^{-1}&\co(\sqrt{b})S^{-1}\\
S^{-1}&S^{-1}&\co(\sqrt{b})S^{-1}\end{array}\right)+S^{-2}.
\]
Denoting the third term on the right-hand side by $K_2$, repeating the same arguments as before,  it is easy to see 
\begin{equation}
\label{eq:soba}
|(({\rm Op}(K_2)+{\rm Op}(S^{-2}))U,U)|\leq C \Bigl(\|\lr{D}^{-1}U\|^2+\sum_{j=1}^2\|U_j\|^2+ ({\rm Op}(b)U_3,U_3)\Bigr).
\end{equation}
Now we turn to the term with $|\al+\be|=1$.  Note
\[
S^{(\al)}_{(\be)}A^{(\be)}_{(\al)}= \Big(s^{(\al)}_{ij(\be)}\Big)\left(\begin{array}{ccc}
-a^{(\be)}_{(\al)}&b^{(\be)}_{(\al)}&-c^{(\be)}_{(\al)}\\
0&0&0\\
0&0&0\end{array}\right)=\left(\begin{array}{ccc}
S^{-1}&S^{-1}&\co(\sqrt{b})S^{-1}\\
S^{-1}&S^{-1}&\co(\sqrt{b})S^{-1}\\
\co(\sqrt{b})S^{-1}&\co(\sqrt{b})S^{-1}&\co(b)S^{-1}\end{array}\right),
\]
since $c^{(\be)}_{(\al)}=\co(\sqrt{b})$ and $b^{(\al)}_{(\be)}=\co(\sqrt{b})$ for $|\al+\be|=1$ and hence
\[
\big(\sum_{|\al+\be|=1}\cdots\big)\#\lr{\xi}=\left(\begin{array}{ccc}
S^{0}&S^{0}&\co(\sqrt{b})S^{0}+S^{-1}\\
S^{0}&S^{0}&\co(\sqrt{b})S^{0}+S^{-1}\\
\co(\sqrt{b})S^{0}+S^{-1}&\co(\sqrt{b})S^{0}+S^{-1}&\co(b)S^{0}+\co(\sqrt{b})S^{-1}+S^{-2}\end{array}\right)=K_1.
\]
The same arguments proves
\[
|({\rm Op}(K_1)U,U)|\leq C\big(\|\lr{D}^{-1}U\|^2+\sum_{j=1}^2\|U_j\|^2+({\rm Op}(b)U_3,U_3)\big).
\]
Consider $A^*\#S$. We have the representation
\[
A^*\#S=A^*S+\sum_{|\al+\be|=1}\frac{(-1)^{|\be|}}{2i}(A^*)^{(\al)}_{(\be)}S^{(\be)}_{(\al)}+\sum_{|\al+\be|=2}\cdots +S^{-3}=A^*S+{\tilde K}.
\]
Repeating similar arguments, one gets
\[
|({\rm Op}(\lr{\xi}\#{\tilde K})U,U)|\leq C\big(\|\lr{D}^{-1}U\|^2+\sum_{j=1}^2\|U_j\|^2+({\rm Op}(b)U_3,U_3)\big).
\]
Since $A^*S=SA$, taking \eqref{eq:taisho} into account, we see
\begin{align*}
(SA)\#\lr{\xi}-\lr{\xi}\#(A^*S)=(SA)\#\lr{\xi}-\lr{\xi}\#(SA)\\
=\left(\begin{array}{ccc}
S^{0}&S^{0}&\co(\sqrt{b})S^{0}+S^{-1}\\
S^{0}&S^{0}&\co(\sqrt{b})S^{0}+S^{-1}\\
\co(\sqrt{b})S^{0}+S^{-1}&\co(\sqrt{b})S^{0}+S^{-1}&\co(b)S^0+\co(\sqrt{b})S^{-1}+S^{-2}\end{array}\right).
\end{align*}
Summarizing the above estimates, we obtain the following
\begin{lem}
\label{lem:setudo:2} Assume $\la \xi \ra^{\al}c_{(\be)}^{(\al)} = \co(\sqrt{b})$ for $|\al + \be| \leq 2$. There is $C>0$ such that
\[
|({\rm Op}(S\#A\#\lr{\xi}-\lr{\xi}\#A^*\#S)U,U)|\leq C\big(\sum_{j=1}^2\|U_j\|^2+({\rm Op}(b)U_3,U_3)+\|\lr{D}^{-1}U\|^2\big).
\]
\end{lem}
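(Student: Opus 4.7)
The proof is essentially a synthesis of the entrywise estimates derived in the paragraphs immediately preceding the statement, so my plan is to organize them into a single bound.

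First, at the level of symbols I decompose
\[
S\#A\#\lr\xi - \lr\xi\#A^*\#S = \bigl((SA)\#\lr\xi - \lr\xi\#(SA)\bigr) + K_1 + K_2 + S^{-2} - \lr\xi\#\tilde K,
\]
where $K_1$, $K_2$ are the first- and second-order correction matrices coming from the Weyl expansion of $S\#A$ and $\tilde K$ plays the analogous role in $A^*\#S$. The zeroth-order pieces collapse to a commutator because $S$ is a symmetrizer of $A$: from \eqref{eq:taisho} the matrix $SA$ is symmetric, so $A^*S = (SA)^* = SA$, and the leading contributions from $S\#A\#\lr\xi$ and $\lr\xi\#A^*\#S$ differ only by the Weyl commutator of $SA$ with $\lr\xi$.

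Second, I would invoke the entrywise structure already recorded. Every matrix on the right of the decomposition has its $(i,j)$ entry in $S^0$ for $i,j \in \{1,2\}$, in $\co(\sqrt b)S^0 + S^{-1}$ for $(i,j)$ with exactly one index equal to $3$, and in $\co(b)S^0 + \co(\sqrt b)S^{-1} + S^{-2}$ at the $(3,3)$ slot. The $\co(\sqrt b)$ factors arise by applying the hypothesis $\la\xi\ra^\alpha c^{(\alpha)}_{(\beta)} = \co(\sqrt b)$ for $|\alpha+\beta| \leq 2$, together with the analogous bound on derivatives of $b$, to the entries of $SA$ from \eqref{eq:taisho} and to the tensors $S^{(\alpha)}_{(\beta)}A^{(\beta)}_{(\alpha)}$ displayed in the excerpt; composition with $\lr\xi$ only shifts the order.

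Third, I would bound the quadratic form of each entry against one of the three terms on the right of the lemma. $L^2$-continuity handles the $S^0$ entries on the upper $2\times 2$ block, contributing $\|U_1\|^2 + \|U_2\|^2$. For a $\co(\sqrt b)S^0$ off-diagonal entry in the third row or column, Cauchy--Schwarz splits the cross term into an $\|U_j\|^2$ piece and a $\|\op(T)U_3\|^2$ piece, where $T$ is a scalar symbol satisfying $T^2 \leq C b$ after absorption of the $\lr\xi$ factor; the latter is absorbed into $(\op(b)U_3, U_3)$ via the Fefferman--Phong argument on $C b - T^2$ employed in the proof of Lemma~\ref{lem:kagi}. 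The $\co(b)S^0$ entry at $(3,3)$ yields $(\op(b)U_3, U_3)$ directly by the same Fefferman--Phong step, and residual $S^{-1}$, $S^{-2}$ symbols give $\|\lr{D}^{-1}U\|^2$ by $L^2$-continuity, in line with \eqref{eq:soba}.

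The main obstacle is the structural bookkeeping at the entry level: one must verify that every derivative of $SA$ and of the correction tensors that involves $c$ inherits a $\co(\sqrt b)$ factor, and that this property survives composition with $\lr\xi$ and the various symbolic expansions. This is a chain-rule computation dictated by the hypothesis $\la\xi\ra^\alpha c^{(\alpha)}_{(\beta)} = \co(\sqrt b)$ together with $b \geq \delta_1 t$; once it is in place, the bound claimed by the lemma is exactly the sum of the four displayed entrywise estimates for $(SA)\#\lr\xi - \lr\xi\#(SA)$, $K_1$, $K_2$, and $\lr\xi\#\tilde K$ appearing in the preceding text.
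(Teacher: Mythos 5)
Your proposal is correct and follows the paper's own argument essentially verbatim: the paper proves the lemma exactly by your decomposition into $(SA)\#\lr{\xi}-\lr{\xi}\#(SA)$ plus the correction matrices $K_1$, $K_2$, $\tilde K$, the symmetrizer identity $A^*S=SA$ from \eqref{eq:taisho}, and the entrywise bounds (Cauchy--Schwarz plus Fefferman--Phong applied to $Cb-T^2$ as in Lemma \ref{lem:kagi}) for the $S^0$, $\co(\sqrt{b})S^0$ and $\co(b)S^0$ slots. No gaps.
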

Consider $S\#\phi\#\lr{\xi}-\lr{\xi}\#\phi\#S$, where $\phi\in S^0$ is scalar. Recall
\[
S\#\phi=\phi S+\sum_{|\al+\be|=1}\frac{(-1)^{|\be|}}{2i}S^{(\al)}_{(\be)}\phi^{(\be)}_{(\al)}+\sum_{|\al+\be|=2}\cdots +S^{-3}.
\]
For $|\al+\be|=2$ one has
\[
S^{(\al)}_{(\be)}\phi^{(\be)}_{(\al)}=\left(\begin{array}{ccc}
S^{-2}&S^{-2}&S^{-2}\\
S^{-2}&S^{-2}&S^{-2}\\
S^{-2}&S^{-2}&\co(\sqrt{b})S^{-2}\end{array}\right)
\]
and hence
\[
(S\#\phi)\#\lr{\xi}=(\phi S)\#\lr{\xi}+\Big(\sum_{|\al+\be|=1}\cdots\Big)\#\lr{\xi}+\left(\begin{array}{ccc}
S^{-1}&S^{-1}&S^{-1}\\
S^{-1}&S^{-1}&S^{-1}\\
S^{-1}&S^{-1}&\co(\sqrt{b})S^{-1}+S^{-2}\end{array}\right)+S^{-2}.
\]
Denoting the third term on the right-hand side by $K_2,$ we have the same estimate as \eqref{eq:soba}. Similarly one has
\[
\lr{\xi}\#(\phi\#S)=\lr{\xi}\#(\phi S)+\lr{\xi}\#\Big(\sum_{|\al+\be|=1}\cdots\Big)+\left(\begin{array}{ccc}
S^{-1}&S^{-1}&S^{-1}\\
S^{-1}&S^{-1}&S^{-1}\\
S^{-1}&S^{-1}&\co(\sqrt{b})S^{-1}+S^{-2}\end{array}\right)+S^{-2}
\]
Consider the term with $|\al+\be|=1$ and observe that
\[
S^{(\al)}_{(\be)}\phi^{(\be)}_{(\al)}=\left(\begin{array}{ccc}
S^{-1}&S^{-1}&\co(\sqrt{b})S^{-1}\\
S^{-1}&S^{-1}&\co(\sqrt{b})S^{-1}\\
\co(\sqrt{b})S^{-1}&\co(\sqrt{b})S^{-1}&g^{(\al)}_{(\be)}\phi^{(\be)}_{(\al)}\end{array}\right)
\]
with $g=b^2-2ac$. Therefore  
\begin{equation}
\label{eq:rituin}
\lr{\xi}\#(S^{(\al)}_{(\be)}\phi^{(\be)}_{(\al)})=\left(\begin{array}{ccc}
S^{0}&S^{0}&\co(\sqrt{b})S^{0}+S^{-1}\\
S^{0}&S^{0}&\co(\sqrt{b})S^{-1}+S^{-1}\\
\co(\sqrt{b})S^{0}+S^{-1}&\co(\sqrt{b})S^{0}+S^{-1}&\co(b)S^0+\co(\sqrt{b})S^{-1}+S^{-2}\end{array}\right)
\end{equation}
because $c^{(\al)}_{(\be)}=\co(b)$ for $|\al+\be|=1$ and then
\[
|({\rm Op}(\lr{\xi}\#(S^{(\al)}_{(\be)}\phi^{(\be)}_{(\al)}))U,U)|\leq C\big(\sum_{j=1}^2\|U_j\|^2+({\rm Op}(b)U_3,U_3)+\|\lr{D}^{-1}U\|^2\big).
\]
Similar arguments are applied to $|({\rm Op}(\phi^{(\al)}_{(\be)}S^{(\be)}_{(\al)})U,U)|$. Finally, since
\[
\lr{\xi}\#(\phi S)-(\phi S)\#\lr{\xi}=\left(\begin{array}{ccc}
S^{0}&S^{0}&\co(\sqrt{b})S^{0}+S^{-1}\\
S^{0}&S^{0}&\co(\sqrt{b})S^{-1}+S^{-1}\\
\co(\sqrt{b})S^{0}+S^{-1}&\co(\sqrt{b})S^{0}+S^{-1}&\co(b)S^0+\co(\sqrt{b})S^{-1}+S^{-2}\end{array}\right),
\]
 we obtain
\begin{lem}
\label{lem:setudo:3} Assume $\la \xi \ra^{\al}c_{(\be)}^{(\al)} = \co(b)$ for $|\al + \be| = 1$ and $\la \xi \ra^{\al}c_{(\be)}^{(\al)} = \co(\sqrt{b})$ for $|\al + \be| = 2.$ Then there exists $C>0$ such that
\[
|({\rm Op}(S\#\phi\#\lr{\xi}-\lr{\xi}\#\phi\#S)U,U)|\leq C\big(\sum_{j=1}^2\|U_j\|^2+({\rm Op}(b)U_3,U_3)+\|\lr{D}^{-1}U\|^2\big).
\]
\end{lem}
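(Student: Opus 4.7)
The plan is to assemble the three structural computations already displayed just before the statement, and then convert each matrix-symbol block into the claimed operator-norm bound via the Fefferman--Phong argument used in Section~2. I write
\[
S\#\phi\#\lr{\xi}-\lr{\xi}\#\phi\#S=(S\#\phi)\#\lr{\xi}-\lr{\xi}\#(\phi\#S)
\]
and expand each factor by the Weyl calculus through order two with remainder in $S^{-3}$. Since $\phi$ is scalar, $\phi S=S\phi$, so the zeroth-order parts collapse to the pure commutator $\lr{\xi}\#(\phi S)-(\phi S)\#\lr{\xi}$ whose matrix entries are the ones displayed just above the statement.

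The first-order contributions $\sum_{|\al+\be|=1}\tfrac{(-1)^{|\be|}}{2i}S^{(\al)}_{(\be)}\phi^{(\be)}_{(\al)}$, composed with $\lr{\xi}$ from the appropriate side, give a matrix of the same shape as in \eqref{eq:rituin}; here the hypothesis $\lr{\xi}^{|\al|}c^{(\al)}_{(\be)}=\co(b)$ for $|\al+\be|=1$, combined with $c=\co(b^2)$, is what forces $\lr{\xi}^{|\al|}g^{(\al)}_{(\be)}=\co(b)$ for $g=b^2-2ac$. The second-order corrections composed with $\lr{\xi}$ yield a matrix of type $K_2$ handled as in \eqref{eq:soba}, and the $S^{-3}$ tail composes to $S^{-2}$. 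Summing the three pieces one obtains a matrix symbol $M$ whose $(i,j)$ entries with $i,j\leq 2$ lie in $S^0$, whose $(i,3)$ and $(3,j)$ entries with $i,j\leq 2$ are $\co(\sqrt{b})S^0+S^{-1}$, and whose $(3,3)$ entry is $\co(b)S^0+\co(\sqrt{b})S^{-1}+S^{-2}$.

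To convert $|(\op(M)U,U)|$ into the right-hand side of the lemma I would proceed entry by entry. An $S^0$-entry in the upper $2\times 2$ block contributes at most $C(\|U_1\|^2+\|U_2\|^2)$ by Cauchy--Schwarz. For an off-diagonal symbol $\psi\in\co(\sqrt{b})S^0$ in row or column $3$, I write $|(\op(\psi)U_3,U_i)|\leq \|U_i\|^2+(\op(\psi\#\psi)U_3,U_3)$, observe $\psi\#\psi=\psi^2+S^{-2}$ with $\psi^2\leq Cb$, and apply the Fefferman--Phong inequality to the scalar symbol $Cb-\psi^2$ to bound the last term by $C(\op(b)U_3,U_3)+C\|\lr{D}^{-1}U_3\|^2$. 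The $(3,3)$-piece $\co(b)S^0$ is treated by the same Fefferman--Phong step applied directly, while the $S^{-1}$ and $S^{-2}$ tails absorb into $\|\lr{D}^{-1}U\|^2$.

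The main obstacle is the structural verification of the $(3,3)$-entry of the pure commutator $\lr{\xi}\#(\phi S)-(\phi S)\#\lr{\xi}$: one has to check that the Poisson bracket $\{\lr{\xi},\phi g\}$ and the order-two Weyl correction at that slot land in $\co(b)S^0$ and $\co(\sqrt{b})S^{-1}$ respectively. This is precisely where both derivative hypotheses on $c$ are consumed, with no waste: the $|\al+\be|=1$ bound is needed for the commutator level, while the $|\al+\be|=2$ bound is needed for the second-order Weyl correction. Once this shape is in place, the operator estimate repeats Lemma \ref{lem:kagi} line for line.
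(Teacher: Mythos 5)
Your proposal is correct and follows essentially the same route as the paper: the same Weyl expansion of $S\#\phi\#\lr{\xi}-\lr{\xi}\#\phi\#S$ through second order, the same bookkeeping of the matrix entries (with the $|\al+\be|=1$ and $|\al+\be|=2$ hypotheses on $c$ feeding into the derivatives of $g=b^2-2ac$ exactly as in \eqref{eq:rituin}), and the same Cauchy--Schwarz plus Fefferman--Phong conversion of the $\co(\sqrt{b})$ and $\co(b)$ entries as in Lemma \ref{lem:kagi}. The only difference is that you make the entry-by-entry operator bounds explicit where the paper simply refers back to "the same arguments".
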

Combining Lemmas \ref{lem:setudo:2}, \ref{lem:setudo:3} and Corollary \ref{cor:seiti:1}, one concludes that for sufficiently large $N_1 > 0$ we have
\begin{equation} 
\label{eq:3.4}
\begin{split}
-N_1 (\op(\tilde{S}) U, U) -2t{\rm Im}\: (\op(S)( \op(\varphi) \lr{D} +\op(A) \lr{D}) U, U)\\
 \leq (-N_1\ep_1 + 2C)t \big(\sum_{j=1}^2\|U_j\|^2+(\op(b)U_3,U_3)\big)
+(-N_1(\lambda/2) t^{-1} + 2C t)\|\lr{D}^{-1}U\|^2 \leq 0
\end{split}
\end{equation}

Now we pass to the analysis of the term involving $\pa_t S$.
\begin{lem}
\label{lem:G2:bis} Assume $\pa_t c = \co(b)$. For $\ep>0$ sufficiently small we  have
\[
S\gg \ep t \dif_t S.
\]
\end{lem}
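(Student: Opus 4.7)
The plan is to establish $S - \epsilon t\partial_t S \gg 0$ by Sylvester's criterion, verifying positivity of each leading principal minor of $3(S - \epsilon t\partial_t S)$ for $\epsilon > 0$ sufficiently small, in direct analogy with the proof of Lemma~\ref{lem:G2}. The key ingredients are: $a, b \in S^0$ have bounded $t$-derivatives $\partial_t a, \partial_t b = \mathcal{O}(1)$; the hypothesis of the present lemma $\partial_t c = \mathcal{O}(b)$; the standing bound $c = \mathcal{O}(b^2)$ from \eqref{eq:miki}; and $t \leq b/\delta_1$ coming from $b \geq \delta_1 t$. These bounds will control all the error terms arising in the minor expansions.

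The $1 \times 1$ minor is $3 > 0$. For the $2 \times 2$ minor, direct expansion gives
\[
2a^2 + 6b - 4\epsilon t a\,\partial_t a - 6\epsilon t\,\partial_t b - 4\epsilon^2 t^2(\partial_t a)^2;
\]
applying $|4\epsilon t a\,\partial_t a| \leq a^2 + 4\epsilon^2 t^2(\partial_t a)^2$ together with $b \geq \delta_1 t$ bounds this below by $a^2 + t(6\delta_1 - C\epsilon)$, positive for $\epsilon$ small.

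For the $3 \times 3$ determinant, I would use Jacobi's formula to write
\[
\det(S - \epsilon t\,\partial_t S) = \det S - \epsilon t\,\partial_t \det S + \epsilon^2 g_2 + \epsilon^3 g_3,
\]
a cubic polynomial in $\epsilon$. Lemma~\ref{lem:setudo} provides $\det S \geq \delta b^2(a^2 + b)$. Differentiating the explicit formula $27\det S = b^2(a^2+4b) - 4a^3 c - 18abc - 27c^2$ and inserting the bounds on $\partial_t a, \partial_t b, \partial_t c, c$ yields $|\partial_t \det S| = \mathcal{O}(b(a^2+b))$; combined with $t \leq b/\delta_1$ this gives $t|\partial_t \det S| = \mathcal{O}(b^2(a^2+b))$. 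Analogous entry-wise estimates for the coefficients $g_2, g_3$, which are products of entries of $S$ and $t\partial_t S$, give $|g_2|, |g_3| = \mathcal{O}(b^2(a^2+b))$. Consequently,
\[
\det(S - \epsilon t\,\partial_t S) \geq \bigl(\delta - C\epsilon - C\epsilon^2 - C\epsilon^3\bigr)\,b^2(a^2+b) > 0
\]
for $\epsilon$ sufficiently small.

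The main obstacle is controlling the correction terms in $\partial_t \det S$ coming from differentiating $a^3c$, $abc$ and $c^2$. In particular, the term $-4a^3\,\partial_t c$ is bounded by $\mathcal{O}(|a|^3 b) \subseteq \mathcal{O}(b(a^2+b))$, since $|a|$ is bounded, only by virtue of the hypothesis $\partial_t c = \mathcal{O}(b)$; without this improved regularity of $c$ in $t$, it would only be $\mathcal{O}(|a|^3) = \mathcal{O}(1)$, giving a contribution of order $\mathcal{O}(t) = \mathcal{O}(b)$ to $t\,\partial_t\det S$, which could not be absorbed into the main term $\delta b^2(a^2+b) \sim \delta b^3$ when $b$ is comparable to $t$.
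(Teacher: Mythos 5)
Your proof is correct and follows essentially the same route as the paper: one checks the leading principal minors of $3S-\ep t\,\partial_tS$ and shows, using $t=\co(b)$, the bounds on $c$ and the hypothesis $\partial_tc=\co(b)$, that $\det(3S-\ep t\,\partial_tS)=\det 3S+\ep\,\co\big(b^2(a^2+b)\big)$, which is nonnegative for small $\ep$ by Lemma \ref{lem:setudo}. Your write-up (Jacobi's formula for the $\ep$-expansion, the explicit treatment of the $2\times2$ minor) is simply a more detailed version of the paper's terse computation.
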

\begin{proof}
  Since $\pa_t c = \co(b)$, one has
\[
3S-\ep t \dif_tS=\left(\begin{array}{ccc}
3&2a+\ep\co(t)&-b+\ep\co(t)\\
2a+\ep\co(t)&2a^2+2b+\ep\co(t)&-ab-3c+\ep\co(at)+\ep\co(bt)\\
-b+\ep\co(t)&-ab-3c+\ep\co(at)+\ep\co(bt)&b^2-2ac+\ep \co(bt)\end{array}\right).
\]
 It is not difficult to see that
\[
{\rm det}\,(3S-\ep t\,\dif_tS)={\rm det}\,3S+\ep \co \big(b^2(b+a^2)\big)
\]
because $t=\co(b)$.
\end{proof}

\begin{lem}
\label{lem:kagi:2}  Assume $\pa_t c = \co(b)$, $\la \xi \ra^{\al}c_{(\be)}^{(\al)} = \co(\sqrt{b})$ for $|\al+ \be| = 2$ and $\la \xi \ra^{\al}(ac)_{(\be)}^{(\al)} = \co(\sqrt{b})$ for $|\al+ \be| = 3.$  There exist $\ep > 0$ and $C>0$ such that for $U \in C^{\infty}(\R_t:C_0^{\infty}(\R^n))$ we have
\begin{equation} \label{eq:3.5}
{\mathsf{Re}}({\rm Op}(S-\ep t\,\dif_tS)U,U)\geq -\ep t\big(\sum_{j=1}^2\|U_j\|^2+({\rm Op}(b)U_3,U_3)\big)-Ct^{-1}\ep^{-1}\|\lr{D}^{-1}U\|^2.
\end{equation}
\end{lem}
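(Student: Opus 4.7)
The plan is to adapt the proof of Lemma \ref{lem:kagi} by replacing $S$ with $Q:=S-\ep t\,\pa_t S$. The positivity input comes from Lemma \ref{lem:G2:bis}, which guarantees that for sufficiently small $\ep$ the matrix $Q$ is positive semidefinite (the determinant is checked explicitly there, and the analogous computation for the principal minors of orders $1$ and $2$ is routine, using $t=\co(b)$ since $b\geq\delta_1 t$). Friedrichs' theorem therefore gives $(\op(Q)_F U,U)\geq 0$, and the identity \eqref{eq:sGar}
\[
\op(Q)=\op(Q)_F-\op\Bigl(\sum_{2\leq|\al+\be|\leq 3}\psi_{\al,\be}\,Q^{(\al)}_{(\be)}\Bigr)-\op(R),\qquad R\in S^{-2}_{1/2,0},
\]
reduces the lemma to estimating the symmetrization error; the $\op(R)$ contribution is absorbed into $C\ep^{-1}t^{-1}\|\lr{D}^{-1}U\|^2$ since $\ep t$ is bounded.

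I would then split $Q^{(\al)}_{(\be)}=S^{(\al)}_{(\be)}-\ep t\,(\pa_t S)^{(\al)}_{(\be)}$ and handle the two pieces separately. For the $S$-piece the argument of Lemma \ref{lem:kagi} applies verbatim, with the parameter called $\ep_1$ there now chosen equal to $\ep$; this yields a bound of the form $\ep t\bigl(\sum_{j=1}^{2}\|U_j\|^2+(\op(b)U_3,U_3)\bigr)+C\ep^{-1}t^{-1}\|\lr{D}^{-1}U\|^2$. For the $\ep t\,\pa_t S$ piece, the crux is to verify that the $(3,3)$-entry of $3Q$, namely $\tilde g:=b^2-2ac-\ep t\,\pa_t(b^2-2ac)$, still satisfies the pointwise comparison $\tilde T^{2}\leq Cb$ underpinning the Fefferman--Phong step, where $\tilde T=\psi_{\al,\be}\,\tilde g^{(\al)}_{(\be)}\lr{\xi}$ for $|\al+\be|=2$ (and the analogue $\tilde T_1=\psi_{\al,\be}\,\tilde g^{(\al)}_{(\be)}\lr{\xi}^{3/2}$ for $|\al+\be|=3$). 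This is a consequence of $\pa_t c=\co(b)$, $c=\co(b^2)$, smoothness of $a,b,c$, and the inequality $\ep t\leq (\ep/\delta_1)\,b$ flowing from $b\geq\delta_1 t$: the extra Leibniz contributions from $\pa_t g=2b\pa_t b-2\pa_t(ac)$ are either $\co(\sqrt{b})$ on their own, or carry the prefactor $\ep t$, and $(\ep t)^{2}\co(1)\leq C\ep^{2}b$ by boundedness of $b$.

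Once the pointwise inequality $\tilde T^{2}\leq Cb$ is available, the Fefferman--Phong inequality applied to $Cb-\tilde T^{2}\geq 0$ together with the Cauchy--Schwarz splitting
\[
|(\op(\tilde T)\op(\lr{D}^{-1})U_3,U_3)|\leq \ep t\,\|\op(\tilde T)U_3\|^2+C\ep^{-1}t^{-1}\|\lr{D}^{-1}U_3\|^2,
\]
exactly as in Lemma \ref{lem:kagi}, produces \eqref{eq:3.5}. The main obstacle is precisely this pointwise inequality: one must carefully track the Leibniz expansion of the $(3,3)$-entry of $\ep t\,\pa_t S$ under the given hypotheses, ensuring that the extra $\ep t$ prefactor compensates for the fact that no $\sqrt{b}$-type control on derivatives of $\pa_t c$ is assumed, and that the resulting constants remain uniform in $\ep$ over a bounded interval.
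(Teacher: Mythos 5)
Your proposal is correct and follows essentially the same route as the paper: reduce to the Friedrichs symmetrization error via \eqref{eq:sGar} using the pointwise positivity $S\gg \ep t\,\dif_t S$ from Lemma \ref{lem:G2:bis}, and control the only delicate $(3,3)$-entry by the pointwise bound $T^2\leq Cb$ followed by Fefferman--Phong, with the key observation (also the paper's) that the uncontrolled derivatives of $\dif_t c$ are harmless because they carry the prefactor $\ep t=\co(b)$. The only cosmetic difference is that you split $Q^{(\al)}_{(\be)}$ into the $S$-part and the $\ep t\,\dif_t S$-part, whereas the paper treats them together in a single symbol $T$.
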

\begin{proof} Denoting $Q=S-2\ep t\,\dif_tS$,
 it suffices to prove
\begin{equation}
\label{eq:katuo:2}
\big|{\mathsf{Re}}({\rm Op}\Bigl(\sum_{2 \leq |\al + \be| \leq 3}\psi_{\al\be}Q_{(\be)}^{(\al)}\Bigr)U,U)\big|\leq \ep t\big(\sum_{j=1}^2\|U_j\|^2+({\rm Op}(b)U_3,U_3)\big)+C\ep^{-1}t^{-1}\|\lr{D}^{-1}U\|^2.
\end{equation}
Consider ${\mathsf{Re}}\big({\rm Op}\big(\psi_{\al\be}Q_{(\be)}^{(\al)}\big)U,U\big)$ with $|\al+\be|=2$. Note that
\[
\psi_{\al\be}Q^{(\al)}_{(\be)}=\left(\begin{array}{ccc}
0&S^{-1}&S^{-1}\\
S^{-1}&S^{-1}&S^{-1}\\
S^{-1}&S^{-1}&\psi_{\al\be}\big(g^{(\al)}_{(\be)}-\ep t\,(\dif_tg)^{(\al)}_{(\be)}\big)\end{array}\right),
\]
where $g=b^2-2ac$. Consequently, one deduce
\begin{align*}
|({\rm Op}(\psi_{\al\be}Q^{(\al)}_{(\be)})U,U)|\leq \ep t\sum_{j=1}^2\|U_j\|^2+C\ep^{-1}t^{-1}\|\lr{D}^{-1}U\|^2\\
+|{\mathsf{Re}}\,({\rm Op}(\psi_{\al\be}(g^{(\al)}_{(\be)}-\ep t(\dif_tg)^{(\al)}_{(\be)}))U_3,U_3)|.
\end{align*}
Setting
\[
T=\psi_{\al\be}\big(g^{(\al)}_{(\be)}-\ep t(\dif_tg)^{(\al)}_{(\be)}\big)\lr{\xi}\in S^{0},
\]
we obtain ${\mathsf{Re}}\,\big(\psi_{\al\be}\big(g^{(\al)}_{(\be)}-\ep t(\dif_tg)^{(\al)}_{(\be)}\big)\big)=T\#\lr{\xi}^{-1}+S^{-2}$.  Therefore
\[
{\mathsf{Re}}\,\big({\rm Op}\big(\psi_{\al\be}(g^{(\al)}_{(\be)}-\ep t(\dif_tg)^{(\al)}_{(\be)}\big)U_3,U_3)\leq \ep t\|{\rm Op}(T)U_3\|^2+C\ep^{-1}t^{-1}\|\lr{D}^{-1}U_3\|^2
\]
Note that $\|{\rm Op}(T)U_3\|^2=({\rm Op}(T\#T)U_3,U_3)$ and $T\#T=T^2+S^{-2}$. There is $C>0$ such that 
\[
T^2\leq Cb
\]
because $t=\co(b)$ and $\la \xi \ra^{\al}c^{(\al)}_{(\be)}=\co(\sqrt{b})$ so that $Cb-T^2\geq 0$. Then applying the Fefferman-Phong inequality, we prove the assertion. Let $|\al+\be|=3$ then 
with $T_1=\big(\psi_{\al\be}\big(g^{(\al)}_{(\be)}-\ep t(\dif_tg)^{(\al)}_{(\be)}\big)\big)\#\lr{\xi}^{3/2}$ 
\[
T_1^2\leq Cb
\]
with some $C>0$ since $t=\co(b)$ and $\la \xi \ra^{\al}(ac)^{(\al)}_{(\be)}=\co(\sqrt{b})$ and the proof is similar.
\end{proof}

From (\ref{eq:3.5}) setting $N_2 = \ep^{-1}$ and dividing by $\ep$, one deduces
\begin{equation*}
{\mathsf{Re}} (\op(-N_2 S + t \pa_t S)U, U) \leq t\big(\sum_{j=1}^2\|U_j\|^2+({\rm Op}(b)U_3,U_3)\big)+Ct^{-1}\ep^{-2}\|\lr{D}^{-1}U\|^2
\end{equation*}
and applying Corollary \ref{cor:seiti:1}, this implies
\begin{equation} 
\label{eq:3.7}
\begin{split}
-(N_2 + N_3){\mathsf{Re}} (\op(\tilde{S})U,U) + t {\mathsf{Re}} (\op(\pa_t S)U, U)\\
\leq (-N_3 \ep_1 + 1)t\big(\sum_{j=1}^2\|U_j\|^2+({\rm Op}(b)U_3,U_3)\big)
+t^{-1}(C \ep^{-2}- N_3 \lambda)\|\lr{D}^{-1}\|^2.
\end{split}
\end{equation}
Fixing $\ep$ and $N_2$, we choose $N_3$ sufficiently large and we arrange the right hand side of the above inequality to be negative.\\

Next we turn to the analysis of $2{\mathsf{Im}}(\op({\tilde S})\op(B)U,U).$
 Recall that $(\op({\tilde S})U,U)\gg 0$ by Corollary \ref{cor:seiti:2}. Consequently,
\begin{equation}
\label{eq:raiu}
\begin{split}
&2|(\op({\tilde S})\op(B)U,U)|\leq N^{-1/2}(t\op({\tilde S})\op(B)U,\op(B)U)+N^{1/2}(t^{-1}\op({\tilde S})U,U)\\
&=N^{-1/2}(t\op(B^*)\op({\tilde S})\op(B)U,U)+N^{1/2}(t^{-1}\op({\tilde S})U,U)\\
&\leq N^{-1/2}(t^{-1}t^2\op(B^*)\op(S)\op(B)U,U)+N^{1/2}(t^{-1}\op(\tilde{S})U,U)+ C_2\lambda N^{-1/2}\|\lr{D}^{-1}U\|^2.
\end{split}
\end{equation}
%
%
%
\begin{lem}
\label{lem:parao}There exists $N_4>0$ depending on $T$ and $B$ such that for $0 \leq t \leq T$ and any $\epsilon>0$ there exists $D_{\epsilon}>0$ such that
\[
{\mathsf{Re}}\big(\op(N_4S-t^2B^*SB)U,U\big)\geq -\epsilon t(\sum_{j=1}^2\|U_j\|^2+(cU_3,U_3))-D_{\epsilon}t^{-1}\|\lr{D}^{-1}U\|^2.
\]
\end{lem}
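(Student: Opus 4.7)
My plan is to deduce this estimate by combining the lower bound on $\op(N_4 S)$ from Lemma \ref{lem:kagi} with a direct $L^2$--type upper bound on $|(\op(t^2 B^*SB)U,U)|$, then picking $N_4$ large enough (depending on $T$ and $B$) that the positive $t$-contribution from Lemma \ref{lem:kagi} dominates the $t^2B^*SB$ term. The $\ep$-dependence in the statement is essentially a red herring: the bound I actually expect to obtain has a \emph{nonnegative} coefficient on $t(\cdots)$, so the form with $-\ep t(\cdots)$ will follow for free, with $D_{\ep}$ in fact independent of $\ep$.

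First I would apply Lemma \ref{lem:kagi} to the rescaled symbol $N_4 S$ to get
\begin{equation*}
{\mathsf{Re}}(\op(N_4 S)U,U)\geq N_4\ep_1\, t\bigl(\|U_1\|^2+\|U_2\|^2+(\op(b)U_3,U_3)\bigr)-N_4 C\,t^{-1}\|\lr{D}^{-1}U\|^2.
\end{equation*}
Next I would estimate $|(\op(t^2 B^*SB)U,U)|$ from above. Since $S$ and $B$ are matrices of symbols in $S^0$, Calder\'on--Vaillancourt gives $\|\op(S)\|_{L^2\to L^2}+\|\op(B)\|_{L^2\to L^2}\leq C_{B,S}$, and the Weyl composition formula yields $\op(B^*SB)=\op(B)^*\op(S)\op(B)+\op(R)$ with $R$ a matrix of symbols in $S^{-1}$. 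Using $|(\op(R)U,U)|\leq C\|\lr{D}^{-1}U\|\,\|U\|$ combined with Young's inequality and $t\leq T$, I obtain
\begin{equation*}
|(\op(t^2 B^*SB)U,U)|\leq C_1\,t^2\|U\|^2+C_2\,t^{-1}\|\lr{D}^{-1}U\|^2.
\end{equation*}
To rewrite $t^2\|U\|^2$ in the required shape I would apply the Fefferman--Phong inequality to the nonnegative scalar symbol $b-\delta_1 t$, giving $\delta_1 t\|U_3\|^2\leq(\op(b)U_3,U_3)+C\|\lr{D}^{-1}U_3\|^2$, and combine it with the trivial $t^2\|U_j\|^2\leq T\,t\|U_j\|^2$ for $j=1,2$. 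This yields
\begin{equation*}
|(\op(t^2 B^*SB)U,U)|\leq K_3\,t\bigl(\|U_1\|^2+\|U_2\|^2+(\op(b)U_3,U_3)\bigr)+K_4\,t^{-1}\|\lr{D}^{-1}U\|^2
\end{equation*}
with constants $K_3, K_4$ depending only on $T$, $B$ and $\delta_1$.

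Finally, subtracting the two estimates and choosing $N_4:=K_3/\ep_1$ (depending on $T$ and $B$ alone) makes the coefficient of $t(\cdots)$ nonnegative, so
\begin{equation*}
{\mathsf{Re}}(\op(N_4S-t^2B^*SB)U,U)\geq -(N_4C+K_4)\,t^{-1}\|\lr{D}^{-1}U\|^2,
\end{equation*}
which is already stronger than the claim; since $-\ep t(\cdots)\leq 0$, the stated inequality follows with $D_{\ep}:=N_4C+K_4$. The only mildly delicate point I anticipate is the careful treatment of the lower-order remainder $\op(R)$ and the conversion of $t^2\|U\|^2$ via Fefferman--Phong; there is no serious analytic obstacle beyond bookkeeping.
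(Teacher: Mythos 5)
Your argument is correct, but it takes a genuinely different route from the paper. The paper works at the symbol level: it writes out the matrix $3S-\ep t^2B^*SB$ entry by entry, observes that every perturbation is $\ep\,\co(t^2)$, and absorbs these into the lower bound $\det S\geq \delta b^2(a^2+b)$ using $b\geq\delta_1 t$ (e.g. $\ep t^4|a|\leq C\ep\, t\,b^2(a^2+b)$), thereby proving the symbol inequality $3S\gg \ep t^2 B^*SB$; the operator statement then follows by rerunning the Friedrichs-part/Fefferman--Phong argument of Lemma \ref{lem:kagi:2}, which is where the $-\epsilon t(\cdots)$ loss comes from. You instead stay entirely at the operator level: you take the already-quantized lower bound of Lemma \ref{lem:kagi} for $\op(N_4S)$, bound $|(\op(t^2B^*SB)U,U)|$ crudely by $C_1t^2\|U\|^2+C_2t^{-1}\|\lr{D}^{-1}U\|^2$ via Calder\'on--Vaillancourt (legitimate since $B,S\in S^0$ and the Weyl remainder is in $S^{-1}$), and convert $t^2\|U_3\|^2$ into $t(\op(b)U_3,U_3)$ by Fefferman--Phong applied to $b-\delta_1t\geq 0$, exactly as in Corollary \ref{cor:seiti:2}. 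Your choice $N_4=K_3/\ep_1$ depends only on $T$, $B$, $S$ and $\delta_1$, consistent with the statement, and you correctly read the $(cU_3,U_3)$ in the lemma as $(\op(b)U_3,U_3)$, which is how it is used in \eqref{eq:3.9}. What each approach buys: yours is shorter, avoids redoing the symmetrization machinery, and yields a slightly stronger conclusion ($D_\epsilon$ independent of $\epsilon$, with no $t(\cdots)$ loss at all, which only makes the absorption in \eqref{eq:3.9} easier); the paper's version establishes an actual quasi-positivity of the symbol $N_4S-t^2B^*SB$, which keeps the whole section within one uniform symbol-calculus framework and would be the argument of record if $B$ carried positive order, where a raw $L^2$ bound on $\op(B)$ would not be available.
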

\begin{proof}Recall
\[
3S-\ep t^2B^*SB=\left(\begin{array}{ccc}
3+\ep \co(t^2)&2a +\ep \co(t^2)&-b+\ep\co(t^2)\\
2a +\ep \co(t^2)&2(a^2 + b) +\ep \co(t^2)&-ab - 3c+\ep \co(t^2)\\
-b+\ep \co(t^2)&-ab -3c+\ep \co(t^2)&b^2 - 2 ac+\ep \co(t^2)\end{array}\right)
\]
which proves $3S-\ep t^2 B^*SB\gg 0$ with some $\ep = \ep(T) >0$. To justify this, notice that the terms $\ep\co(t^2 b), \: \ep\co(t^2 c), \:\ep\co(t^2 a^2), \: \ep\co(t^4 a)$ can be absorbed by $\det S$ because $b\geq \delta_1t$. For example,
$$
\ep t^4|a| \leq \frac{1}{2}\ep (t^5 + t^3a^2) \leq C\ep tb^2(a^2+b).
$$
Choosing $\ep(T)$ small enough, we obtain the result. Then the rest of the proof is just a repetition of the proof of Lemma \ref{lem:kagi:2}.
\end{proof}
According to Lemma \ref{lem:parao} and \eqref{eq:raiu}, one has
\begin{equation}
\label{eq:3.9}
\begin{split}
2|(\op({\tilde S})\op(B)U,U)|\leq 2N_4^{1/2}t^{-1} (\op(\tilde{S})U,U)+\epsilon t(\sum_{j=1}^2\|U_j\|^2+(\op(b)U_3,U_3))\\
-N_4^{1/2} \lambda t^{-2} \|\lr {D}^{-1}U\|^2 + D_{\epsilon}t^{-1}\|\lr{D}^{-1}U\|^2+ C_2\lambda N_4^{-1/2}\|\lr{D}^{-1}U\|^2.
\end{split}
\end{equation}
Combining the estimates (\ref{eq:3.4}), (\ref{eq:3.7}), (\ref{eq:3.9}), it follows that
\[
\begin{aligned}
\dif_t{\mathsf{Re}}(t^{-N} e^{-\gam t}\op({\tilde S})U,U)\leq -2{\mathsf{Im}}(t^{-N} e^{-\gam t}\op({\tilde S})F,U)\\
-(N-N_1-N_2-N_3-2N_4^{1/2})t^{-N - 1} e^{-\gam t}{\mathsf{Re}}(\op(\tilde{S})U,U)\\
+\Bigl[C_{\ep}-\lambda\Bigl(N + 1 + N_4^{1/2}- \lambda C \ep^{-1} \Bigr)\Bigr]t^{-N - 2} e^{-\gam t}\|\lr{D}^{-1}U\|^2\\
+\epsilon t^{-N} e^{-\gam t}\big(\sum_{j=1}^2\|U_j\|^2+(\op(b)U_3,U_3)\big)\\
-(\gam- D_{\ep}-C_1 \lambda - C t\lambda  N_4^{-1/2})t^{-N-1} e^{-\gam t}\|\lr{D}^{-1}U\|^2.
\end{aligned}
\]
Note that
\[
\begin{aligned}
2|(t^{-N} e^{-\gam t}\op({\tilde S})F,U)|\leq 2(t^{-N + 1} e^{-\gam t}\op({\tilde S})F,F)^{1/2}(t^{-N - 1} e^{-\gam t}\op({\tilde S})U,U)^{1/2}\\
\leq (t^{-N + 1} e^{-\gam t}\op({\tilde S})F,F)+(t^{-N-1} e^{-\gam t}\op({\tilde S})U,U).
\end{aligned}
\]
Denote $N^*=N_1+N_2+N_3+ 2N_2^{1/2} + 2$ and we choose $0 <\ep\leq \ep_1$.  We  fix $\ep$ and $\lambda > 2C_{\ep}$. Next we fix  $N_4$ so that
$$ N_4^{1/2} >\lambda C \ep^{-1} + 1.$$
Then the term with $t^{-N - 2} e^{-\gam t}\|\lr{D}^{-1}U\|^2$  is absorbed.
Finally we choose $N > N^*$ and $\gamma$ such that $\gamma-  D_{\ep} -C_1 \lambda- C \lambda N_4^{-1/2}T\geq 0$. Then we have
\begin{equation}
\label{eq:keiyaku}
\dif_t{\mathsf{Re}}(t^{-N} e^{-\gam t}\op({\tilde S})U,U)\leq (t^{-N + 1} e^{-\gam t}\op({\tilde S})F,F)-(N-N^*){\mathsf{Re}}\,(t^{-N - 1} e^{-\gam t}\op(\tilde{S})U,U).
\end{equation}
Integrating \eqref{eq:keiyaku} in $\tau$ from $\epsilon>0$ to $t$ and taking  Corollary \ref{cor:seiti:2} into account, one obtains
\begin{prop}
\label{pro:senken} Assume that 
\begin{equation}
\label{eq:joken}
\begin{split}
&b\geq \delta_1t,\quad |ac|\leq \bar\ep\,b^2,\quad |c|\leq \bar\ep\,b^{3/2},\\
&\la \xi \ra^{\al}c^{(\al)}_{(\be)}=\co\big(b\big)\;\;\text{for}\;\;|\al+\be|=1,\quad \la \xi \ra^{\al}c^{(\al)}_{(\be)}=\co\big(\sqrt{b}\big)\;\;\text{for}\;\;|\al+\be|=2,\\
&\la \xi \ra^{\al}(ac)^{(\al)}_{(\be)}=\co\big(\sqrt{b}\big),\;\;|\al+\be|=3,\quad \dif_tc=\co\big(b\big)
\end{split}
\end{equation}
hold globally where $\bar\ep$ is given in Lemmas $\ref{lem:setudo}$ and $\ref{lem:G2}.$ 
Then there exist $\delta_2>0, \gamma_0 > 0, N \in \N$ and $C>0$ such that for $\gamma \geq \gamma_0$ and $0 < \ep \leq t \leq T$ we have  for any  $U \in C^{\infty}(\R_t: C_0^{\infty}(\R^n))$ 
\[
\begin{aligned}
\delta_2 t^{-N+2}e^{-\gam t}\|U(t)\|^2+\delta_2(N-N^*)\int_{\epsilon}^t\tau^{-N+1}e^{-\gam \tau}\|U(\tau)\|^2 d\tau\\
\leq C\epsilon^{-N-1}e^{-\gam \epsilon}\|U(\epsilon)\|^2+ \int_{\epsilon}^t\tau^{-N+1}e^{-\gam \tau}(\op({\tilde S})F(\tau),F(\tau))d\tau.
\end{aligned}
\]
\end{prop}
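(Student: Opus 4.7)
The plan is to obtain the stated estimate by integrating the pointwise-in-$t$ differential inequality \eqref{eq:keiyaku} and then invoking Corollary \ref{cor:seiti:2}. All the substantive analytic work has already been done in assembling \eqref{eq:keiyaku} from Lemmas \ref{lem:setudo:2}--\ref{lem:parao} together with the calibration of the parameters $\ep$, $\lambda$, $N_4$, $N$, $\gamma$; what remains is essentially bookkeeping.

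First, I would integrate \eqref{eq:keiyaku} in $\tau$ over $[\epsilon, t]$ and move the nonpositive $-(N-N^*){\mathsf{Re}}(\op(\tilde S)U, U)$ term to the left, obtaining
\begin{align*}
{\mathsf{Re}}(t^{-N}&e^{-\gamma t}\op(\tilde S)U(t), U(t)) + (N-N^*)\int_{\epsilon}^{t}\tau^{-N-1}e^{-\gamma\tau}{\mathsf{Re}}(\op(\tilde S)U(\tau), U(\tau))\,d\tau\\
&\leq \epsilon^{-N}e^{-\gamma\epsilon}{\mathsf{Re}}(\op(\tilde S)U(\epsilon), U(\epsilon)) + \int_{\epsilon}^{t}\tau^{-N+1}e^{-\gamma\tau}(\op(\tilde S)F(\tau), F(\tau))\,d\tau.
\end{align*}

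Next, Corollary \ref{cor:seiti:2} supplies the pointwise coercivity ${\mathsf{Re}}(\op(\tilde S)V, V) \geq \delta_2\tau^2\|V\|^2$, so the first term on the left is bounded below by $\delta_2 t^{-N+2}e^{-\gamma t}\|U(t)\|^2$ and the integrand on the left is bounded below by $\delta_2\tau^{-N+1}e^{-\gamma\tau}\|U(\tau)\|^2$. Since $N > N^*$ has been arranged in the derivation of \eqref{eq:keiyaku}, this reproduces precisely the left-hand side of the claimed inequality.

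Finally, the initial-data term must absorb a factor of $\epsilon^{-1}$: since $\tilde S(\epsilon, x, \xi) = S + \lambda \epsilon^{-1}\lr{\xi}^{-2}I$ with $S \in S^0$, the operator $\op(\tilde S)|_{t=\epsilon}$ is bounded on $L^2$ with norm $\leq C(1 + \lambda\epsilon^{-1}) \leq C'\epsilon^{-1}$, whence
\[
\epsilon^{-N}e^{-\gamma\epsilon}(\op(\tilde S)U(\epsilon), U(\epsilon)) \leq C\epsilon^{-N-1}e^{-\gamma\epsilon}\|U(\epsilon)\|^2,
\]
which matches the $\epsilon^{-N-1}$ prefactor in the statement. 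There is no real obstacle at this stage; the extra power of $\epsilon^{-1}$ is precisely the price paid for the regularization $\lambda t^{-1}\lr{\xi}^{-2}I$ used to make $\op(\tilde S)$ coercive, and letting $\epsilon\downarrow 0$ later in the argument is exactly where the large weight $t^{-N}$ becomes indispensable.
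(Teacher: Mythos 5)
Your proposal is correct and follows exactly the paper's route: the paper's entire proof of this proposition is the sentence ``Integrating \eqref{eq:keiyaku} in $\tau$ from $\epsilon$ to $t$ and taking Corollary \ref{cor:seiti:2} into account, one obtains,'' which is precisely your integration plus the coercivity bound ${\mathsf{Re}}(\op(\tilde S)V,V)\geq\delta_2\tau^2\|V\|^2$. Your explicit justification of the $\epsilon^{-N-1}$ prefactor via the $O(\epsilon^{-1})$ operator norm of $\op(\tilde S)|_{t=\epsilon}$ is a detail the paper leaves implicit, and it is the right one.
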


\section{Microlocal energy estimates}
First we prove the following
\begin{lem}
\label{lem:kakutyo}
Assume that \eqref{eq:miki} is satisfied in $[0,T]\times {\tilde W}$ where ${\tilde W}$ is a conic neighborhood of $(x_0,\xi_0)$. Then there exist extensions ${\tilde a}(t,x,\xi)\in S^0$, ${\tilde b}(t,x,\xi)\in S^0$ and ${\tilde c}(t,x,\xi)\in S^0$ of  $a$, $b$ and $c$  such that  \eqref{eq:joken} holds globally.
\end{lem}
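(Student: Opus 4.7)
The plan is to construct $\tilde a,\tilde b,\tilde c$ by a standard microlocal cutoff, exploiting the vanishing $a(0,x_0,\xi_0)=b(0,x_0,\xi_0)=c(0,x_0,\xi_0)=0$ at the triple characteristic point to produce the quantitative small constants $\bar\ep$ appearing in \eqref{eq:joken}.

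First I would shrink $\tilde W$ to a smaller conic neighborhood $W_1\subset\tilde W$ of $(x_0,\xi_0)$ (and if necessary reduce $T$) so that the constants implicit in \eqref{eq:miki} are actually small on $[0,T]\times W_1$. Since \eqref{eq:miki} only gives $|c|\leq C b^2$ with some constant $C$, it is enough to make $W_1$ so small that $Cb^{1/2}\leq \bar\ep$ and $C|a|\leq \bar\ep$ there; then $|c|\leq \bar\ep b^{3/2}$ and $|ac|\leq \bar\ep b^2$ hold on $[0,T]\times W_1$. Next I pick a homogeneous cutoff $\chi(x,\xi)\in S^0$ with $\chi\equiv 1$ on a smaller conic neighborhood $W\subset\subset W_1$ of $(x_0,\xi_0)$ and $\supp(\chi)\subset W_1$, and define
\[
\tilde a = \chi a,\qquad \tilde b = \chi^2 b + (1-\chi^2)\delta_1 t,\qquad \tilde c = \chi^N c,
\]
for a sufficiently large integer $N$. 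Then $\tilde a,\tilde b,\tilde c\in S^0$, these agree with $a,b,c$ on $W$, and $\tilde b\geq \delta_1 t$ globally because $b\geq \delta_1 t$ on $\supp(\chi)$.

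The algebraic bounds follow at once from $\tilde b\geq \chi^2 b$: $|\tilde c|=\chi^N|c|\leq \bar\ep\chi^N b^{3/2}\leq \bar\ep\chi^{N-3}\tilde b^{3/2}\leq \bar\ep\,\tilde b^{3/2}$ for $N\geq 3$, and similarly $|\tilde a\tilde c|\leq \bar\ep\,\tilde b^2$. For the derivative bounds I would expand by Leibniz, e.g.
\[
(\tilde c)^{(\al)}_{(\be)} = \sum_{\substack{\al_1+\al_2=\al\\ \be_1+\be_2=\be}}\binom{\al}{\al_1}\binom{\be}{\be_1}(\chi^N)^{(\al_1)}_{(\be_1)}\,c^{(\al_2)}_{(\be_2)},
\]
and estimate each summand using $(\chi^N)^{(\al_1)}_{(\be_1)}=O(\chi^{N-|\al_1+\be_1|}\lr{\xi}^{-|\al_1|})$, the bounds on $c^{(\al_2)}_{(\be_2)}$ from \eqref{eq:miki}, and the key conversion $\sqrt{\tilde b}\geq \chi\sqrt{b}$. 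Terms with many $\chi$-derivatives are compensated by the extra $b$-powers supplied by the lower-order derivative bounds on $c$ (namely $c=\co(b^2)$ and $c^{(\al')}_{(\be')}=\co(b)$ for $|\al'+\be'|=1$) together with the boundedness of $b$ on $\supp(\chi)$. The analogous estimate for $(\tilde a\tilde c)^{(\al)}_{(\be)}$ is handled the same way, after first noting that Leibniz applied to $ac$, combined with the individual bounds on $a$ and $c$ in \eqref{eq:miki}, yields the auxiliary estimate $\lr{\xi}^{|\al'|}(ac)^{(\al')}_{(\be')}=\co(\sqrt b)$ for every $|\al'+\be'|\leq 3$. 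Finally $\pa_t\tilde c=\chi^N\pa_t c=\co(\chi^N b)=\co(\tilde b)$.

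The main obstacle is the bookkeeping in the derivative estimates: for each term in the Leibniz expansion, the loss of a power of $\chi$ when differentiating $\chi^N$ must be absorbed by a corresponding gain in the $b$-power of the derivative bound on $c$ (or $ac$). This is routine but tedious and forces an explicit, modest lower bound on $N$, arising from the worst case in which all derivatives fall on $\chi^N$ and none on $c$.
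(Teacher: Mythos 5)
Your proposal is correct and follows essentially the same route as the paper: a conic cutoff $\chi$ with $\tilde a=\chi a$, $\tilde b=\chi^2 b+(\text{positive filler})$, $\tilde c=\chi^N c$, smallness of $a,b$ near the triple characteristic point to produce $\bar\ep$, and Leibniz bookkeeping in which lost powers of $\chi$ are compensated by the stronger bounds on lower-order derivatives of $c$ via $\chi\sqrt b\leq\sqrt{\tilde b}$. The paper simply fixes $N=3$ (so $\tilde a\tilde c=\chi^4 ac$) and uses $M\tilde\chi$ with $M\geq\delta_1 T$ as the filler instead of $(1-\chi^2)\delta_1 t$; these are immaterial differences.
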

\begin{proof} 
Assume that \eqref{eq:miki} is satisfied in $[0,T]\times {\tilde W}$. Choose conic neighborhoods $U$, $V$, $W$ of $(x_0,\xi_0)$ such that $U\Subset V\Subset W\Subset {\tilde  W}$. Take $0\leq \chi(x,\xi)\in S^0$, $0\leq {\tilde \chi}(x,\xi)\in S^0$ such that $\chi=1$ on $V$ and $\chi = 0$ outside $W$ and ${\tilde \chi}=0$ on $U$ and ${\tilde \chi}=1$ outside $V$. Choosing $W$ and $T$ small one can assume that $\chi b$ is small as we please in  $[0,T]\times \R^{2n}$ because $b(0,x_0,\xi_0)=0$.  
%
%
We define the extensions of $a$, $b$, $c$ by 
\[
{\tilde a}=\chi a,\quad {\tilde b}=\chi^2b+M{\tilde \chi},\quad {\tilde c}=\chi^3c
\]
where $M>0$ is a positive constant which we will choose below. Note that
\begin{align*}
|{\tilde a}{\tilde c}|=\chi^4|ac|\leq C|a|\chi^4b^2\leq \bar\ep (\chi^2b)^2\leq\bar\ep\,{\tilde b}^2,\\
|{\tilde c}|=\chi^3|c|\leq C\chi^3b^2=Cb^{1/2}(\chi^2b)^{3/2}\leq \bar\ep\, {\tilde b}^{3/2}
\end{align*}
taking $a(0,x_0,\xi_0)=0$, $b(0,x_0,\xi_0)=0$ into account and choosing $W$ small.

If $(x,\xi)\in V$ then ${\tilde b}(t,x,\xi)=b+M{\tilde \chi}\geq \delta_1t$ and if $(x,\xi)$ is outside $V$ then ${\tilde b}(t,x,\xi)=\chi^2b+M \geq \delta_1t$ 
 for $[0,T]\times\R^{2n}$ choosing $M$ so that $M\geq \delta_1T$. Thus we have
 \[
 {\tilde b}(t,x,\xi)\geq \delta_1t\quad (t,x,\xi)\in [0,T]\times\R^{2n}.
 \]
 We turn to estimate derivatives of ${\tilde c}$ and ${\tilde a}{\tilde c}$. For $|\al+\be|=1$ it is clear that
 \[
 \lr{\xi}^{|\al|}\big|{\tilde  c}^{(\al)}_{(\be)}\big|=\lr{\xi}^{|\al|}\big|(\chi^3 c)^{(\al)}_{(\be)}\big|\leq C(\chi^2b^2+\chi^3b)\leq C_1\chi^2b\leq C_1{\tilde b}.
 \]
 Similarly for $|\al+\be|=2$ one sees
 \[
\lr{\xi}^{|\al|}\big|(\chi^3 c)^{(\al)}_{(\be)}\big|\leq C(\chi b^2+\chi^2 b+\chi^3 \sqrt{b})\leq C_1\chi\sqrt{b}= C_1(\chi^2 b)^{1/2}\leq C_1{\tilde b}^{1/2}.
 \]
For $|\al+\be|=3$, taking $\la \xi \ra^{\al}(a c)^{(\al)}_{(\be)}=\co(\sqrt{b})$ into account, one has
\begin{align*}
\lr{\xi}^{|\al|}\big|({\tilde a}{\tilde c})^{(\al)}_{(\be)}\big|=\lr{\xi}^{|\al|}\big|(\chi^4 a c)^{(\al)}_{(\be)}\big|\\
\leq C(\chi b^2+\chi^2 b+\chi^3 \sqrt{b}+\chi^4\sqrt{b})\leq C_1\chi \sqrt{b}\leq C_1{\tilde b}^{1/2}.
\end{align*}
Since $|\dif_t{\tilde c}|=|\chi^3\dif_tc|\leq C\chi^3b\leq C{\tilde b}$ is obvious the proof is complete.
\end{proof}
\begin{rem}\rm In the proof of Lemma \ref{lem:kakutyo} replacing ${\tilde b}$ by $\chi^2b+M{\tilde \chi}+M'\chi_0(\xi)$ where $\chi_0(\xi)\in C_0^{\infty}(\R^n)$ which is $1$ near $\xi=0$ and $M'>0$ is a suitable positive constant it suffices to assume that \eqref{eq:miki} is satisfied in $[0,T]\times {\tilde W}$ for $|\xi|\geq 1$.
\end{rem}
Let  $V\Subset V_1\Subset  \Omega$ and $u \in C^{\infty}(\R_t: C_0^{\infty}(V))$  Let $\{\chi_{\al}\}$ be a finite partition of unity with $\chi_{\al}(x,\xi)\in S^0$ so that
\[
\sum_{\alpha} \chi^2_{\al}(x,\xi)=\chi^2(x),
\]
where $\chi(x)=1$ on $\overline{ V}$ and ${\rm supp}\,\chi \subset V_1$. We can suppose that ${\rm supp}\,\chi_{\al}\subset V_1$. 
We repeat the argument in \cite[Section 4]{NP}, studying a system
$$
D_t U_{\al} = (\op(\varphi)\lr{D} + \op(A)\lr{D} + \op(B))U_{\al} + F_{\al}
$$
with $U_{\al} =$ $ ^t\big((D_t-\op(\phi)\lr{D})^2 \chi_{\al}u, \lr{D}(D_t-\op(\phi)\lr{D})\chi_{\al}u, \lr{D}^2 \chi_{\al}u).$ One extends the coefficients $a$, $b$, $c$ and $\phi$ outside the support of $\chi_{\al}$ and one can assume that  (\ref{eq:joken}) are satisfied globally. Thus we obtain the following

\begin{thm}
\label{pro:energy}Let  $Y\Subset \Omega$. Assume that for every point $(x_0,\xi_0) \in T^*\Omega \setminus \{0\}$ there exist a conic neighborhood $W \subset T^*\Omega \setminus \{0\}$ and $T(x_0, \xi_0) > 0$ such that the estimates \eqref{eq:joken} are satisfied for $0 \leq t \leq T(x_0, \xi_0)$ and $(x, \xi) \in W$. Then there exist $c>0,\: T_0 > 0,\:\gamma_0 > 0,  C>0$ and $N \in \N$ such that for $\gamma \geq \gamma_0,\: 0 < \epsilon < t \leq T_0$ we have for any $U \in C^{\infty}(\R_t: C_0^{\infty}(Y))$
\begin{equation} 
\label{eq:4.1}
\begin{split}
c\,t^{-N+2}e^{-\gam t}\|U(t)\|^2+c\int_{\epsilon}^t\tau^{-N+1}e^{-\gam \tau}\|U(\tau)\|^2d\tau \\
\leq C\epsilon^{-N-1}e^{-\gam \epsilon}\|U(\epsilon)\|^2+C\int_{\epsilon}^t\tau^{-N+1}e^{-\gam \tau}\|f(\tau)\|^2d\tau.
\end{split}
\end{equation}
\end{thm}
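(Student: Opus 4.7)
The plan is to localize microlocally and reduce to the global energy estimate already established in Proposition \ref{pro:senken}. By the hypothesis, for each $(x_0, \xi_0) \in T^*\Omega \setminus \{0\}$ there is a conic neighborhood $W_{x_0,\xi_0}$ and a time $T(x_0,\xi_0) > 0$ on which \eqref{eq:joken} holds. Using the homogeneity in $\xi$ and the compactness of the base projection $\overline{Y}$, I can extract a finite refinement and choose a microlocal partition of unity $\{\chi_\alpha\} \subset S^0$ with $\sum_\alpha \chi_\alpha^2 = \chi^2$, where $\chi(x) = 1$ on $\overline{Y}$, $\operatorname{supp}\chi \Subset V_1 \Subset \Omega$, and each $\operatorname{supp}\chi_\alpha$ sits inside one of the neighborhoods $W_\alpha$. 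Setting $T_0 = \min_\alpha T(x_0^\alpha, \xi_0^\alpha)$ ensures the hypothesis applies uniformly.

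For each $\alpha$, Lemma \ref{lem:kakutyo} (together with the remark following it) produces globally defined extensions $\tilde a_\alpha, \tilde b_\alpha, \tilde c_\alpha \in S^0$ agreeing with $a, b, c$ on $\operatorname{supp}\chi_\alpha$ and satisfying \eqref{eq:joken} on all of $[0, T_0] \times \R^{2n}$. Let $\tilde P_\alpha$ denote the associated differential operator (keeping $\phi$ and the lower-order symbols unchanged off $\operatorname{supp}\chi_\alpha$ by a similar extension procedure). Define $u_\alpha = \op(\chi_\alpha) u$ and form the vector
\[
U_\alpha = \,{}^t\!\bigl((D_t - \op(\phi)\lr{D})^2 u_\alpha,\ \lr{D}(D_t - \op(\phi)\lr{D})u_\alpha,\ \lr{D}^2 u_\alpha\bigr),
\]
so that $U_\alpha$ solves a system of the form \eqref{eq:redE} with the extended coefficients and with right-hand side $F_\alpha = \,{}^t\!(g_\alpha, 0, 0)$, where $g_\alpha = \op(\chi_\alpha) f + [\op(\chi_\alpha), P]u + R_\alpha u$. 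Here $R_\alpha := (\tilde P_\alpha - P)\op(\chi_\alpha)$ has symbol vanishing on $\operatorname{supp}\chi_\alpha$, hence is smoothing modulo $S^{-\infty}$ by pseudolocality, while $[\op(\chi_\alpha), P]$ is of order $2$.

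Applying Proposition \ref{pro:senken} to each $U_\alpha$ (valid since the extended coefficients satisfy \eqref{eq:joken} globally) and summing in $\alpha$ gives, modulo the norm equivalence $\sum_\alpha \|U_\alpha(t)\|^2 = \|\op(\chi) U(t)\|^2 + \co(\|\lr{D}^{-1}U(t)\|^2)$ which follows from the sharp G\aa rding inequality and the identity $\sum_\alpha \chi_\alpha^2 = \chi^2$, an inequality of the desired shape with $\|U\|^2$ on the left and $\|f\|^2 + \sum_\alpha \|g_\alpha - \op(\chi_\alpha)f\|^2$ on the right. The smoothing remainder $R_\alpha u$ is harmless, and the commutator $[\op(\chi_\alpha), P]u$ contributes an integral of $\co(\|U(\tau)\|^2)$ on the right, multiplied by a constant independent of $N$.

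The main obstacle is absorbing this commutator contribution back into the left-hand side. The crucial feature is that Proposition \ref{pro:senken} provides the coefficient $\delta_2(N - N^*)$ in front of the integral $\int_\epsilon^t \tau^{-N+1} e^{-\gamma \tau}\|U\|^2\,d\tau$, where $N^*$ is a fixed integer. Hence by choosing $N$ larger than $N^*$ plus the commutator constant, the bootstrap absorbs the term $\int \tau^{-N+1} e^{-\gamma\tau} \|U(\tau)\|^2\,d\tau$ arising from $[\op(\chi_\alpha), P]u$ into the left-hand side, and the remaining data depends only on $\|U(\epsilon)\|^2$ and $\|f\|^2$. After this absorption and reassembling via the partition of unity, one arrives at \eqref{eq:4.1}, completing the proof.
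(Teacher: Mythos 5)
Your proposal is correct and follows essentially the same route as the paper: a finite microlocal partition of unity with $\sum_\alpha\chi_\alpha^2=\chi^2$, extension of the coefficients via Lemma \ref{lem:kakutyo} so that \eqref{eq:joken} holds globally, application of Proposition \ref{pro:senken} to each localized system, and absorption of the order-two commutator errors into the term $\delta_2(N-N^*)\int\tau^{-N+1}e^{-\gamma\tau}\|U\|^2d\tau$ by taking $N$ large. In fact you supply more detail on the commutator and reassembly steps than the paper, which simply refers to the argument of \cite[Section 4]{NP}.
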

\begin{cor}
\label{cor:energy}Let  $Y\Subset \Omega$. Assume that for every point $(x_0,\xi_0) \in T^*\Omega \setminus \{0\}$ there exist a conic neighborhood $W \subset T^*\Omega \setminus \{0\}$ and $T(x_0, \xi_0) > 0$ such that the estimates \eqref{eq:miki} are satisfied for $0 \leq t \leq T(x_0, \xi_0)$ and $(x, \xi) \in W$. Then the same assertion as in Theorem $4.1$ holds.
\end{cor}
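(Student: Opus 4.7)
The plan is to reduce Corollary \ref{cor:energy} directly to Theorem \ref{pro:energy} by observing that, after possibly shrinking the conic neighborhood $W$ and the time interval $T(x_0,\xi_0)$, the microlocal hypothesis \eqref{eq:miki} already implies the microlocal hypothesis \eqref{eq:joken} of Theorem \ref{pro:energy}.

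A side-by-side comparison shows that the two sets of conditions differ in only one place: \eqref{eq:joken} requires the explicit pointwise smallness bounds
\[
|c|\leq\bar\ep\,b^{3/2},\qquad |ac|\leq\bar\ep\,b^2,
\]
with the fixed $\bar\ep$ supplied by Lemmas \ref{lem:setudo} and \ref{lem:G2}, whereas \eqref{eq:miki} supplies instead the single order-of-magnitude bound $c=\co(b^2)$, i.e.\ $|c|\leq C_0 b^2$. Everything else — the linear lower bound $b\geq \delta_1 t$, the derivative estimates on $c$ for $|\alpha+\beta|\leq 2$, the derivative estimate on $ac$ for $|\alpha+\beta|=3$, and $\dif_t c=\co(b)$ — is common to both sets and passes from \eqref{eq:miki} to \eqref{eq:joken} verbatim.

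To supply the missing smallness I would factor $|c|\leq C_0 b^{1/2}\cdot b^{3/2}$ and $|ac|\leq C_0 |a|\,b^2$, so that the desired inequalities reduce to arranging $C_0 b^{1/2}\leq\bar\ep$ and $C_0|a|\leq\bar\ep$ on $[0,T(x_0,\xi_0)]\times W$. At a triple characteristic point one has $a(0,x_0,\xi_0)=b(0,x_0,\xi_0)=0$, as recalled after \eqref{eq:1.2}; continuity of $a$ and $b$ together with their degree-$0$ homogeneity in $\xi$ then makes $|a|$ and $b$ as small as we please once $W$ is shrunk to a sufficiently thin conic neighborhood of $\xi_0$ and $T(x_0,\xi_0)$ is decreased. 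At an $(x_0,\xi_0)$ that is not a triple characteristic point, the analogous localization is provided by the cutoff construction of Lemma \ref{lem:kakutyo}: the modified symbols $\tilde a=\chi a$, $\tilde b=\chi^2 b+M\tilde\chi$, $\tilde c=\chi^3 c$ coincide with the originals on the inner neighborhood $U$ where $\chi\equiv 1$ and $\tilde\chi\equiv 0$, and the proof of that lemma explicitly verifies \eqref{eq:joken} for them.

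With microlocal \eqref{eq:joken} secured at every $(x_0,\xi_0)$, Theorem \ref{pro:energy} applies and delivers the estimate \eqref{eq:4.1}. The main obstacle I anticipate is ensuring that the shrinking of $W$ is uniform across the conic directions near $\xi_0$, so that the resulting neighborhood is still genuinely conic; this is immediate from the degree-$0$ homogeneity of $a$, $b$, $c$ in $\xi$, so no further work beyond this reduction is required.
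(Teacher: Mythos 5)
Your reduction is exactly the paper's: the corollary is obtained by combining Theorem \ref{pro:energy} with Lemma \ref{lem:kakutyo}, whose proof contains precisely your factorizations $|c|\leq C_0 b^{1/2}\cdot b^{3/2}$ and $|ac|\leq C_0|a|\,b^{2}$ together with the smallness of $a$ and $b$ coming from $a(0,x_0,\xi_0)=b(0,x_0,\xi_0)=0$ after shrinking $W$ and $T$. The proposal is correct and follows essentially the same route, so nothing further is needed.
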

The same argument can be applied for the adjoint operator $P^*$. 
With 
$$V={^t}\big((D_t-\op(\phi)\lr{D})^2v, \lr{D}(D_t-\op(\phi)\lr{D})v, \lr{D}^2v\big)$$
 the equation $P^*v=g$ is reduced to
\begin{equation}
\label{eq:redE}
D_tV=\op(\varphi) \lr{D}V +(\op(A)\lr{D}+\op(\tilde{B}))V+G,
\end{equation}
with $G ={^t}(g,0,0).$ Here the principal symbol is the same, while the lower order terms change. To study the Cauchy problem for $P^*$ in $0 < t  < T$ with initial data on $t = T$ one considers
\begin{equation} 
\label{eq:kiso}
\begin{split}
-\dif_t(t^{N}e^{\gam t}\op({\tilde S})V,V)=-N(t^{N-1} e^{\gam t}\op({\tilde S})V,V)-\gam (t^{N} e^{\gam t}\op({\tilde S})V,V)\\
- (t^{N} e^{\gam t}\op(\dif_t S)V,V)
- \lambda (N- 1)t^{N-2} e^{\gam t}\|\lr{D}^{-1}U\|^2 - \lambda \gam t^{N - 1} e^{\gam t}\|\lr{D}^{-1} U\|^2\\
 +2{\mathsf{Im}}\,(t^{N} e^{\gam t}(\op({\tilde S})(\op(\varphi) \lr{D} + \op(A)\lr{D}+\op({\tilde B}))V,V))+ 2{\mathsf{Im}}(t^{N} e^{\gam t}\op({\tilde S})G,V).
\end{split}
\end{equation}
Repeating the argument of Section 3, one obtains the following

\begin{thm}
\label{pro:energy:bis}Let  $Y\Subset \Omega$. Assume that for every point $(x_0,\xi_0) \in T^*\Omega \setminus \{0\}$ there exist a conic neighborhood $W \subset T^*\Omega \setminus \{0\}$ and $T(x_0, \xi_0) > 0$ such that the estimates \eqref{eq:joken} are satisfied for $0 \leq t \leq T(x_0, \xi_0)$ and $(x, \xi) \in W$. Then there exist $c>0,\: T_0 > 0,\:\gamma_0 > 0,  C>0$ and $N \in \N$ such that for $\gamma \geq \gamma_0,\: 0 < \epsilon < t \leq T_0$ we have for any $V \in C^{\infty}(\R_t: C_0^{\infty}(Y))$
\begin{equation} 
\label{eq:4.4}
\begin{split}
c\,t^{N+2}e^{\gam t}\|V(t)\|^2+c\int_{t}^{T_0}\tau^{N+1}e^{\gam \tau}\|V(\tau)\|^2d\tau \\
\leq C T_0^{N-1}e^{\gam T_0}\|V(T_0)\|^2+C\int_{t}^{T_0}\tau^{N+1}e^{\gam \tau}\|g(\tau)\|^2d\tau.
\end{split}
\end{equation}
\end{thm}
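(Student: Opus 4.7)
The approach is to mirror the Section 3 argument verbatim for the backward-in-time problem, using the same symmetrizer $\tilde S = S + \lambda t^{-1}\lr{\xi}^{-2}I$ and the energy functional $(t^N e^{\gamma t}\op(\tilde S)V,V)$. The principal symbol of $P^*$ coincides (modulo scalar adjoint) with that of $P$, so Lemma 2.3, Corollary 2.2, and Lemmas 3.1--3.4 all remain in force after replacing $U,F,B$ by $V,G,\tilde B$. The structural differences are only three: the sign of $\partial_t$ is reversed; the commutator Im-terms in (4.3) come with the opposite sign; and the coercive contribution produced by differentiating the weight is $-N t^{N-1}e^{\gamma t}(\op(\tilde S)V,V)$ rather than $-N t^{-N-1}e^{-\gamma t}(\op(\tilde S)U,U)$. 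Both are negative on the right-hand side, so they play the same absorbing role.

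The concrete steps are as follows. First, bound $|2\,\mathsf{Im}(\op(\tilde S)(\op(\varphi)\lr{D}+\op(A)\lr{D})V,V)|$ using Lemmas 3.1 and 3.2 (the sign flip is immaterial since the lemmas give absolute-value estimates); combined with the favorable $-N_1(\op(\tilde S)V,V)/t$ piece this yields the analog of (3.4). Second, control $|(\op(\partial_t S)V,V)|$ via Lemma 3.3 and absorb it into $(\op(\tilde S)V,V)/t$ through Corollary 2.2, obtaining the analog of (3.7). Third, treat the lower-order term $2\,\mathsf{Im}(\op(\tilde S)\op(\tilde B)V,V)$ by Lemma 3.4 exactly as in (3.9), since $\tilde B\in\op S^0$ with the same structural bounds as $B$. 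Fourth, estimate the source contribution by Cauchy--Schwarz,
\[
2\bigl|(t^N e^{\gamma t}\op(\tilde S)G,V)\bigr|\leq (t^{N+1}e^{\gamma t}\op(\tilde S)G,G)+(t^{N-1}e^{\gamma t}\op(\tilde S)V,V).
\]

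Fifth, choose the parameters in the same nested order as in Section 3: fix $\epsilon\leq\epsilon_1$, then $\lambda>2C_\epsilon$, then $N_4$ with $N_4^{1/2}>\lambda C\epsilon^{-1}+1$, and finally $N>N^\ast$ together with $\gamma\geq\gamma_0$ large enough so that the $t^{N-2}\|\lr{D}^{-1}V\|^2$ and $t^{N-1}\|\lr{D}^{-1}V\|^2$ terms are absorbed and the analog of (3.10) reads
\[
-\partial_t\,\mathsf{Re}(t^N e^{\gamma t}\op(\tilde S)V,V)\leq (t^{N+1}e^{\gamma t}\op(\tilde S)G,G)-(N-N^\ast)\,\mathsf{Re}\,(t^{N-1}e^{\gamma t}\op(\tilde S)V,V).
\]
Integrating this inequality in $\tau$ from $t$ to $T_0$ and invoking the lower bound of Corollary 2.2, namely $\mathsf{Re}(\op(\tilde S)V,V)\geq \delta_2 t^2\|V\|^2$, produces (4.4) after collecting the $T_0^{N-1}e^{\gamma T_0}\|V(T_0)\|^2$ boundary term and bounding the integrated source by $\|g(\tau)\|^2$ modulo the harmless factor $\|\op(\tilde S)\|$.

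The main obstacle, as in the forward case, is the consistent book-keeping of the five free parameters $\epsilon,\lambda,N_4,N,\gamma$: each must be chosen in the right order so that the error terms produced by the commutator, the $\partial_t S$ term, and the $\tilde B$ term are successively absorbed into the two favorable pieces $-N t^{N-1}(\op(\tilde S)V,V)$ and $-\lambda(N-1)t^{N-2}\|\lr{D}^{-1}V\|^2$. No new micro-local or spectral analysis is required, because the symmetrizer $\tilde S$ and all its associated estimates depend only on the principal symbol, which $P$ and $P^*$ share.
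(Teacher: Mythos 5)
Your proposal matches the paper's own treatment, which simply records the backward energy identity for $(t^{N}e^{\gamma t}\op({\tilde S})V,V)$ and then invokes the Section 3 argument verbatim; your sign bookkeeping, use of Lemmas 3.1--3.6 and Corollaries 2.1--2.2, parameter ordering, and the final integration from $t$ to $T_0$ all agree with that. The one point worth making explicit is that the $\partial_t S$ term now enters the identity with the opposite sign, so you need the variants of Lemmas 3.3 and 3.5 with $S+\epsilon t\,\partial_t S$ in place of $S-\epsilon t\,\partial_t S$; these hold by the identical determinant computation, since the perturbation terms there are sign-indefinite $\epsilon\,\mathcal{O}(\cdot)$ quantities anyway.
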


Following the argument in \cite{NP}, we may absorb the weight $\tau^{-N}$ and obtain energy estimates with a loss of derivatives. For the sake of completeness we recall this argument. Consider $Pu=f$ for $u \in C^{\infty}(\R_t: C_0^{\infty}(\R^n))$. Assume $u(\ep, x) = u_t(\ep, x) = u_{tt}(\ep, x) = 0$.
Differentiating $Pu = f$ with respect to $t$, we determine the functions
$D _t^j u(\ep, x) = u_j(x) \in C_0^{\infty}(\R^n)$ and set
$$
u_M(t,x) = \sum_{j=0}^M \frac{1}{j!}u_j(x)(i (t- \ep))^j, \:0 <  \ep \leq t \leq T_0.
$$
Therefore $w = u - u_M \in C^{\infty}(\R_t: C_0^{\infty}(\R^n))$ satisfies $Pw = f_M$ with
$$
D_t^j f_M(\ep, x) = 0,\: j = 0,1,\ldots, M-3,\quad D_t^j w(\ep, x) = 0,\:j = 0,1,\ldots,M.
$$
Consequently, from Theorem \ref{pro:energy} one deduce the existence of $N \in \N$ and $C > 0$ such that for $\ep > 0$, $ Z \Subset \Omega$ and a solution of $Pu = f\in C_0^{\infty}([0, T_0] \times Z)$ for $0 < \ep \leq t \leq T_0$, $x \in Z$ with 
$$
u(\ep, x) = u_t(\ep, x) = u_{tt}(\ep, x) = 0
$$
 we have
\begin{equation} \label{eq:4.5}
\sum_{j +| \al|\leq 2}\int_{\ep}^t \|\pa_t^{j} \pa_x^{\al}u(s, x)\|_{L^2(Z)} ds\leq C \int_{\ep}^t \|\sum_{j + |\al| \leq N} \pa_t^j \pa_x^{\al}Pu(s, x)\|_{L^2(Z)} ds,
\end{equation}
where $C$ is independent on $\ep$. We can obtain a similar estimates for higher order derivatives. \\

By applying the estimate (\ref{eq:4.5}) and the fact that under the assumptions of Theorem \ref{pro:energy} the symbol $p$ is strictly hyperbolic for $0 < t \leq T_0$ one can obtain the existence of a solution of the Cauchy problem in $[0, T_0] \times Z$ repeating the argument in \cite[Theorem 25.4.5]{ H2}. 
The fact that $p$ is strictly hyperbolic for $0 < t \leq T_0$, is equivalent to  $\Delta > 0$ for $0 < t \leq T_0,$ $\Delta$ being the discriminant of the equation  $p = 0$ with respect to $\tau$. On the other hand, $\Delta = 27 \det S$ (see also Corollary \ref{cor:kusatu}) and $\det S > 0$ for $t >0$ by Lemma \ref{lem:setudo}. The local uniqueness of the solution of the Cauchy problem for $P$ can be obtained taking into account Theorem \ref{pro:energy:bis} for the adjoint operator $P^*$ and using the argument of \cite[Theorem 25.4.5]{H2}. We leave the details to the reader.\\

Finally, we deduce

\begin{cor}
\label{cor:main} Under the assumptions of Theorem \ref{pro:energy}  the Cauchy problem for $P$ is $C^{\infty}$ well posed  in $[0, T_0] \times Z$ for all lower order terms.
\end{cor}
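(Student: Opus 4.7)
The plan is to combine the energy estimate \eqref{eq:4.5}, which follows from Theorem \ref{pro:energy} via the polynomial approximation trick $u_M$ described above, with the corresponding estimate for the adjoint operator coming from Theorem \ref{pro:energy:bis}, following the scheme of \cite[Theorem 25.4.5]{H2}.

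For existence, I would first observe that by Lemma \ref{lem:setudo} one has $\det S > 0$ for $t > 0$, hence the discriminant $\Delta = 27 \det S$ is strictly positive there and $P$ is strictly hyperbolic on $[\epsilon, T_0] \times Z$ for every $\epsilon > 0$. Given $f \in C_0^{\infty}([0, T_0] \times Z)$ and smooth Cauchy data at $t = 0$, a standard reduction (subtracting a Taylor-like smooth extension that realizes the data and adjusting $f$) reduces matters to the case of zero Cauchy data. Classical strictly hyperbolic theory then produces a unique smooth solution $u_{\epsilon}$ of $P u_{\epsilon} = f$ on $[\epsilon, T_0] \times Z$ with vanishing Cauchy data at $t = \epsilon$. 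The a priori bound \eqref{eq:4.5}, together with its higher-order analogues obtained by differentiating $Pu = f$ in $(t, x)$ and applying the same energy argument, shows that the family $\{u_{\epsilon}\}_{\epsilon > 0}$ is bounded in $C^{k}([\epsilon, T_0]; H^{s}(Z))$ for every $k, s \in \N$, with constants uniform in $\epsilon$. Extending each $u_{\epsilon}$ by zero for $t < \epsilon$ and passing to the limit via a diagonal compactness argument then yields $u \in C^{\infty}([0, T_0] \times Z)$ with $Pu = f$ and vanishing Cauchy data at $t = 0$.

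For uniqueness and continuous dependence, the same scheme is run backward on $P^{*}$ using Theorem \ref{pro:energy:bis}. Given $g \in C_0^{\infty}$, one solves $P^{*} v = g$ on $[0, T_0]$ with vanishing Cauchy data at $T_0$ and obtains uniform smooth bounds on $v$ from \eqref{eq:4.4} by the same polynomial-approximation device applied at $t = T_0$. An integration-by-parts/duality argument against these adjoint solutions, identical to the one in \cite[Theorem 25.4.5]{H2}, then forces any distributional solution of $Pu = 0$ with vanishing Cauchy data at $t = 0$ to vanish on $[0, T_0] \times Z$, and simultaneously gives the continuous dependence of $u$ on $f$ and on the initial data in appropriate Sobolev norms, hence in $C^{\infty}$.

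The main obstacle is the polynomial weight $\tau^{-N}$ that appears in both \eqref{eq:4.1} and \eqref{eq:4.4}: it prevents a direct application of an energy estimate down to $t = 0$, since the term $\epsilon^{-N-1} \|U(\epsilon)\|^{2}$ blows up as $\epsilon \to 0$. This is precisely the role of the polynomial approximation $u_M$ introduced in the text preceding the statement; by subtracting $u_M$ one arranges that the remainder $w = u - u_M$ has sufficiently many vanishing time-derivatives at $t = \epsilon$ to absorb the singular factor $\epsilon^{-N-1}$ uniformly in $\epsilon$, at the price of a finite loss of derivatives on the right-hand side. Once this quantitative loss is tracked, the remainder of the argument is a routine transcription of the existence/uniqueness scheme of \cite[Theorem 25.4.5]{H2}, and the details may be left to the reader as indicated in the paper.
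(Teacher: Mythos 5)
Your proposal follows essentially the same route as the paper: strict hyperbolicity for $t>0$ (via $\Delta = 27\det S>0$ from Lemma \ref{lem:setudo}) to solve from $t=\epsilon$, the uniform a priori bound \eqref{eq:4.5} obtained through the polynomial approximation $u_M$ to pass to the limit $\epsilon\to 0$, and the adjoint estimate of Theorem \ref{pro:energy:bis} combined with the duality scheme of \cite[Theorem 25.4.5]{H2} for uniqueness. You have correctly identified the key point (absorbing the singular weight $\tau^{-N}$ by flattening the data at $t=\epsilon$), so your write-up is an acceptable expansion of the argument the paper itself only sketches.
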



\begin{thebibliography}{99}


\bibitem{BBP}  E. Bernardi, A. Bove and V. Petkov, {\em Cauchy problem for effectively hyperbolic operators with triple characteristics of variable multiplicities}, Journal of Hyperbolic Diff. Equations, {\bf 12} (3) (2015), 535-579.


\bibitem{Fr} K.O.Friedrichs, Pseudo-differential operators, An Introduction, Lecture Notes, Courant Inst. Math. Sci., New York Univ., 1968.


\bibitem{H2} L. H\"ormander, {\em Analysis of Linear Partial Differential Operators}, III, Springer-Verlag, 1985, Berlin Hedelberg, New York, Tokyo.

\bibitem{IP} V. Ja. Ivrii and V. M. Petkov, {\em Necessary conditions for the Cauchy problem for non-strictly hyperbolic equations to be well posed}, Uspehi Mat. Nauk, {\bf 29: 5} (1974), 1-70 (in Russian), English translation:
Russian Math. Surveys, {\bf 29:5} (1974), 3-70.

\bibitem{I0} V.Ivrii, {\em Correctness of the Cauchy problem in Gevrey classes for nonstrictly hyperbolic operators}, Math. USSR Sbornik, {\bf 25} (1975), 365-387.

\bibitem{I} V. Ivrii, {\em Sufficient conditions for regular and completely regular hyperbolicity}, Trudy Moscow Mat. Obsc.,{\bf 33} (1976), 3-66 (in Russian), English translation: Trans. Moscow Math. Soc. {\bf  1} (1978), 165.



\bibitem{Le} J.Leray, Hyperbolic Differential Equations, Inst. Adv. Study, Princeton (1953).


\bibitem{N3} T.Nishitani, {\em Energy inequality for non strictly hyperbolic operators in the Gevrey class}, J. Math. Kyoto Univ. {\bf 23} (1983), 739-773.


\bibitem{N2} T. Nishitani, {\em The effectively hyperbolic Cauchy problem} in The Hyperbolic Cauchy Problem, Lecture Notes in Mathematics, {\bf 1505}, Springer-Verlag, 1991, pp. 71-167.

\bibitem{N4} T. Nishitani, {\em Linear Hyperbolic Differential Equations}, (Japanese), Asakura Shoten, 2015.

\bibitem{NP} T. Nishitani and V. Petkov, {\em Cauchy problem for effectively hyperbolic operators with triple characteristics},
Journal de Math. Pures et Appl., doi.org/10.1016/j.matpur.2018.06.006. 

\bibitem{Nuij} W.Nuij, {\em A note on hyperbolic polynomials}, Math. Scand., {\bf 23} (1968), 69-72.



\end{thebibliography}
\end{document}